\newcommand{\email}[1]{\href{mailto:#1}{#1}}
\newtheorem{theorem}{Theorem}
\newtheorem{observation}[]{Observation}
\newtheorem{lemma}[]{Lemma}
\newtheorem{corollary}[]{Corollary}
\newtheorem{proposition}[]{Proposition}
\newtheorem{conjecture}[]{Conjecture}
\newtheorem{definition}[]{Definition}
\begin{document}
\title{The Frank number and nowhere-zero flows on graphs\footnote{An extended abstract of this work appeared in the proceedings of the 49th International Workshop on Graph-Theoretic Concepts in Computer Science (WG 2023)~\cite{WG2023}.}}

\author{{\sc Jan GOEDGEBEUR\footnote{Department of Computer Science, KU Leuven Campus Kulak-Kortrijk, 8500 Kortrijk, Belgium}\;\footnote{Department of Applied Mathematics, Computer Science and Statistics, Ghent University, 9000 Ghent, Belgium}\;,
Edita MÁ\v{C}AJOVÁ\footnote{Comenius University, Mlynská dolina, 842 48 Bratislava, Slovakia}\;, }\\[1mm]
and {\sc Jarne RENDERS\footnotemark[1]}\;\footnote{E-mail: \email{jan.goedgebeur@kuleuven.be}; \email{macajova@dcs.fmph.uniba.sk}; \email{jarne.renders@kuleuven.be}}}

\date{}

\maketitle   
\begin{center}
\begin{minipage}{125mm}
{\bf Abstract.} An edge $e$ of a graph $G$ is called \emph{deletable} for some orientation~$o$ if the restriction of $o$ to $G-e$ is a strong orientation. Inspired by a problem of Frank, in 2021 H\"orsch and Szigeti proposed a new parameter for $3$-edge-connected graphs, called the Frank number, which refines $k$-edge-connectivity. The \textit{Frank number} is defined as the minimum number of orientations of $G$ for which every edge of $G$ is deletable in at least one of them. They showed that every $3$-edge-connected graph has Frank number at most $7$ and that in case these graphs are also $5$-edge-colourable the parameter is at most $3$. 
Here we strengthen both results by showing that every $3$-edge-connected graph has Frank number at most $4$ and that every graph which is $3$-edge-connected and $3$-edge-colourable has Frank number~$2$. The latter also confirms a conjecture by Bar\'at and Bl\'azsik. 
Furthermore, we prove two sufficient conditions for cubic graphs to have Frank number~$2$ and use them in an algorithm to computationally show that the Petersen graph is the only cyclically $4$-edge-connected cubic graph
up to $36$ vertices having Frank number greater than $2$. 

\bigskip

{\textbf{Keywords:} Frank number, Connectivity, Orientation, Snark, Nowhere-zero flows}

\medskip

{\textbf{MSC 2020:} 05C21, 05C40, 05C85}

\end{minipage}
\end{center}

\section{Introduction}
\label{sect:intro}

An \emph{orientation} $(G,o)$ of a graph $G$ is a directed graph with vertices $V(G)$ such that each edge $uv\in E(G)$ is oriented either from $u$ to $v$ or from $v$ to $u$ by the function $o$.
An orientation is called \emph{strong} if, for every pair of distinct vertices $u$ and $v$, there exists an oriented $uv$-path, i.e.\ an oriented path starting at vertex $u$ and ending at vertex $v$. It is not difficult to see that an orientation is strong if and only if each edge cut contains edges oriented in both directions.

An edge $e$ is \emph{deletable} in an orientation $(G,o)$ if the restriction of $o$ to $E(G) - \{e\}$ yields a strong orientation of $G - e$. 
Note that this implies that $o$ is a strong orientation.
In this paper, a \emph{circuit} is a connected $2$-regular graph, while a \emph{cycle} is a graph in which every vertex has even degree.
A graph in which the removal of fewer than $k$ edges cannot separate the graph into two components which both contain a cycle, is called \emph{cyclically $k$-edge-connected}. The \emph{cyclic edge connectivity} of a graph $G$ is the largest $k$ for which $G$ is cyclically $k$-edge-connected. 

Inspired by a problem of Frank, in 2021, H\"{o}rsch and Szigeti~\cite{HS21} proposed a new parameter for $3$-edge-connected graphs called the Frank number. 
This parameter can be used to refine a theorem by Nash-Williams~\cite{Na60}
stating that a graph has a $k$-arc-connected orientation if and only if it is $2k$-edge-connected.
\begin{definition}\label{def:fn}
For a $3$-edge-connected graph $G$, the \emph{Frank number} --~denoted by $fn(G)$~-- is the minimum number $k$ for which $G$ admits $k$ orientations such that every edge $e\in E(G)$ is deletable in at least one of them.
\end{definition}

Note that Definition~\ref{def:fn} does not make sense for graphs which are not $3$-edge-connected as such a graph $G$ has at least one edge which is not deletable in any orientation of $G$.

\medskip

A first general upper bound for the Frank number was established by H\"{o}rsch and Szigeti in~\cite{HS21}. They proved that $fn(G)\le 7$ for every 3-edge-connected graph $G$. Moreover, in the same paper it is shown that the Berge-Fulkerson conjecture~\cite{Se79} implies that $fn(G)\leq 5$. We improve the former result by showing the following upper bound. 

\begin{theorem}\label{thm:general}
Every $3$-edge-connected graph $G$ has $fn(G)\leq 4$.
\end{theorem}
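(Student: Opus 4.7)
I would start from the elementary characterisation that, in a strong orientation $(G,o)$, an edge $e$ is deletable iff every edge cut $K$ containing $e$ still has edges in both directions across $K$ after $e$ is removed. For a $3$-edge-connected graph the tightest constraints come from the $3$-edge-cuts, where ``good for $e$'' means the other two edges are oriented oppositely across the cut; larger cuts give analogous but weaker conditions and are typically easier to handle once strongness is ensured. The goal is therefore to construct $4$ strong orientations of $G$ such that, for each edge $e$, some orientation makes every $3$-edge-cut through $e$ good for $e$.

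The natural tool is nowhere-zero flows. By Seymour's theorem $G$ admits a nowhere-zero $\mathbb{Z}_6$-flow, which via $\mathbb{Z}_6\cong\mathbb{Z}_2\oplus\mathbb{Z}_3$ decomposes into an even subgraph $T\subseteq E(G)$ (the $\mathbb{Z}_2$-part) together with a $\mathbb{Z}_3$-flow $(D,\phi)$, the nowhere-zero property forcing that no edge $e$ has both $e\notin T$ and $\phi(e)=0$. Starting from $D$ I would form four orientations $o_1,\ldots,o_4$ by reversing $D$ along each of $\emptyset$, $T$, a second even subgraph $T'$ read off from $\phi$ (for instance built from a directed-circuit decomposition of $\phi$), and $T\triangle T'$. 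Reversing an even subgraph preserves all in- and out-degrees, so each $o_i$ remains a strong orientation as soon as $D$ is.

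Verification then reduces to a local calculation at each $3$-cut $K$: the $\mathbb{Z}_2$-parity of $T$ and $T'$ forces $|K\cap T|$ and $|K\cap T'|$ to be even, while conservation of $\phi$ across $K$ constrains the signed sum of $\phi$-values on $K$ to vanish modulo $3$. A short finite case analysis on the pattern of $T$- and $T'$-membership combined with the $\phi$-values on $K$ should show that, for every edge of $K$, at least one of the four reversal choices orients the other two edges of $K$ oppositely across $K$.

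The main obstacle I anticipate is \emph{uniformity}: an edge $e$ generally lies in several $3$-edge-cuts, and $e$ is deletable in $o_i$ only if the same $i$ witnesses good-ness at $e$ in every $3$-cut through $e$. A priori the four orientations could cover each cut individually while no single orientation covers all of them simultaneously at a given $e$. Overcoming this likely needs either a carefully coordinated choice of $T$ and $T'$ so that the correlations between cuts line up, or an inductive structural reduction along non-trivial $3$-edge-cuts: split $G$ along such a cut, apply the result to the smaller $3$-edge-connected pieces obtained, and reassemble orientations across the cut edges. I expect the cleanest path to be the structural reduction to the essentially $4$-edge-connected case, where the only $3$-cuts are the trivial degree-$3$-vertex cuts and the flow argument should become transparent.
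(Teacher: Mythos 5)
Your proposal correctly starts from Seymour's $\mathbb{Z}_6\cong\mathbb{Z}_2\oplus\mathbb{Z}_3$ decomposition, which is also the paper's entry point (after a reduction to cubic graphs that you do not address). Beyond that, however, there are two genuine gaps.

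First, the assertion that ``reversing an even subgraph preserves all in- and out-degrees, so each $o_i$ remains a strong orientation as soon as $D$ is'' is not justified and is false in general. Reversing the edges of an even subgraph $T$ changes the in-degree of a vertex $v$ by $b-a$, where $a$ and $b$ are the numbers of $T$-edges entering and leaving $v$ under the base orientation; this vanishes only if the base orientation restricted to $T$ is balanced at every vertex, which the orientation underlying a $\mathbb{Z}_3$-flow need not be. And even equal in/out-degrees would not by themselves imply strong connectivity. So the construction, as stated, does not even produce four strong orientations, let alone certify deletability.

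Second, and more fundamentally, you correctly identify the central obstacle — a single one of the four orientations must be good for $e$ across \emph{every} cut through $e$ simultaneously — but you leave it unresolved, suggesting only that a coordinated choice of $T'$ or a structural reduction ``should'' work. This is precisely what the paper's key lemma (Lemma~\ref{lemma:jedna_dva}) resolves: in any all-positive nowhere-zero $k$-flow $(o,f)$ on a $3$-edge-connected graph, an edge with $f(e)=1$ is deletable in $o$, because if some cut of $G-e$ were oriented all one way, flow conservation across the cut together with $3$-edge-connectivity would force $e$ alone to balance a sum of at least two positive values, impossible when $f(e)=1$. This gives a \emph{uniform} deletability certificate that handles all cuts through $e$ at once. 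The paper then reduces to cubic graphs, where the $\mathbb{Z}_2$-part is a union of vertex-disjoint circuits $D$ and the $\mathbb{Z}_3$-part $H'$ subdivides a bipartite (hence $3$-edge-colourable) cubic graph; it partitions $E(G)$ by membership in $D$ and the colour classes of $H'$ and by orientation agreement, and then chooses four explicit scalar combinations (Table~\ref{tab:4Flows}) so that every edge takes value~$1$ in at least one resulting all-positive nowhere-zero flow. Your family of four ``reversals along $\emptyset, T, T', T\triangle T'$'' is too rigid to encode these scalar choices, and without the value-$1$ lemma there is no mechanism to conclude deletability from them. Until both gaps are filled, the proposal does not constitute a proof.
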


We would also like to note that in~\cite{BB24} Bar{\'a}t and Bl{\'a}zsik  very recently independently proved that $fn(G)\leq 5$ using methods similar to ours based on our Lemma~\ref{lemma:jedna_dva} from this paper. 

\medskip

In~\cite{HS21}, H\"{o}rsch and Szigeti also conjectured that every $3$-edge-connected graph $G$ has $fn(G)\leq 3$ and showed that the Petersen graph has Frank number equal to $3$. In this paper we conjecture a stronger statement: 


\begin{conjecture}\label{conjecture1}
The Petersen graph is the only cyclically $4$-edge-connected
graph with Frank number greater than $2$. 
\end{conjecture}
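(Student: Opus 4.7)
The plan is to reduce the conjecture to a structural statement about snarks. First, I would dispose of the $3$-edge-colourable case: by the other main result of this paper, every $3$-edge-connected $3$-edge-colourable graph $G$ satisfies $fn(G)=2$, so in particular this holds for every $3$-edge-colourable cyclically $4$-edge-connected graph. This already covers all cyclically $4$-edge-connected cubic graphs admitting a nowhere-zero $4$-flow, and, via Vizing's theorem together with the extra edge availability at vertices of degree $\geq 4$, should also handle most of the non-cubic case.

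Next, I would argue that the problem essentially reduces to the cubic case. A cyclically $4$-edge-connected graph with minimum degree at least $4$ has strictly higher edge-connectivity than a generic $3$-edge-connected graph, and should allow the machinery behind Theorem~\ref{thm:general} (in particular Lemma~\ref{lemma:jedna_dva}) to be applied with enough slack to yield just two orientations whose deletable edge sets cover $E(G)$. Concretely, I would try to show that the auxiliary structures on which the $fn \leq 4$ bound relies acquire, in the high-minimum-degree setting, additional freedom that lets one merge pairs of orientations into one, dropping the count from $4$ to $2$.

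The genuinely hard case is the remaining one: cyclically $4$-edge-connected cubic graphs that are \emph{not} $3$-edge-colourable, i.e.\ cyclically $4$-edge-connected snarks. Here I would invoke the two sufficient conditions for Frank number $2$ that will be established later in the paper for cubic graphs, and attempt to prove the structural statement that every cyclically $4$-edge-connected snark other than the Petersen graph satisfies at least one of them. A natural strategy is to combine known structural results on snarks (girth, oddness, presence of specific reducible configurations) with a direct construction of the two desired orientations based on a suitable cycle double cover or a partial proper $3$-edge-colouring of the graph minus a small controlled ``bad'' set.

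The main obstacle, by a wide margin, is this last step. Snarks notoriously resist uniform structural descriptions, and any argument singling out the Petersen graph as the unique exception is likely to rely on properties that are conjectural in their own right (existence of cycle double covers with few circuits, Fulkerson-type edge covers, or bounded oddness) or on a delicate case analysis by cyclic connectivity, girth, and reducible subgraphs. The computational evidence up to $36$ vertices is strong, but turning it into a proof will almost certainly require a new insight explaining why the Petersen graph is singular among cyclically $4$-edge-connected snarks with respect to deletable orientations.
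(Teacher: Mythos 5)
The statement you were asked to prove is labelled as a \emph{conjecture} in the paper, and the paper does not prove it. What the paper actually establishes is: (i) a reduction to the cubic case (Corollary~\ref{cor:cubic_extension}, via local cubic modifications), (ii) Theorem~\ref{thm:4flow} handling all $3$-edge-connected graphs with a nowhere-zero $4$-flow (hence all $3$-edge-colourable cubic graphs), (iii) two sufficient conditions for Frank number $2$ for cyclically $4$-edge-connected cubic graphs (Theorems~\ref{thm:2OddCycles} and~\ref{thm:2Odd1Even}), and (iv) computational verification (Proposition~\ref{prop:petersenComp}) that no cyclically $4$-edge-connected snark on at most $36$ vertices other than the Petersen graph violates the conjecture. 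You cannot produce a proof where the paper itself leaves the statement open, and you correctly recognize this: your last paragraph explicitly concedes that the crucial step --- showing that every cyclically $4$-edge-connected snark other than the Petersen graph satisfies one of the sufficient conditions or is otherwise handled --- is out of reach and would likely depend on other open conjectures.

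Two technical points in the portions you do sketch are worth flagging. First, the appeal to Vizing's theorem to ``handle most of the non-cubic case'' does not work: Vizing gives a $(\Delta+1)$-edge-colouring, which says nothing about nowhere-zero $4$-flows or Frank number when $\Delta\ge 4$, and a cyclically $4$-edge-connected $3$-edge-connected graph can have a mix of degree-$3$ and higher-degree vertices, so no uniform minimum-degree argument applies. The paper's actual reduction is Corollary~\ref{cor:cubic_extension}: a suitable local cubic modification preserves cyclic $4$-edge-connectivity and does not decrease the Frank number, so it suffices to settle the cubic case. Second, your idea of ``extra slack'' from higher connectivity is on a productive track but is undeveloped: if a graph is genuinely $4$-edge-connected, Jaeger's theorem gives a nowhere-zero $4$-flow, and Theorem~\ref{thm:4flow} then yields $fn=2$ immediately --- no merging of the four orientations from Theorem~\ref{thm:general} is needed. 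But this route again fails to cover graphs that are cyclically $4$-edge-connected yet only $3$-edge-connected, which is exactly where the cubic snark bottleneck lives. So the genuine gap you identify is real, and it is the same gap the paper leaves open.
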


Bar{\'a}t and Bl{\'a}zsik showed in~\cite{BB24_quest} that for any $3$-edge-connected graph $G$, there exists a $3$-edge-connected cubic graph $H$ with $fn(H)\geq fn(G)$. 
Corollary~\ref{cor:cubic_extension} in Section~\ref{subsec:reduction_cubic}
extends this result by showing that for any cyclically $4$-edge-connected $3$-edge-connected graph $G$, there exists a cyclically $4$-edge-connected cubic graph $H$ with $fn(H)\geq fn(G)$.
Hence, it is enough to prove the conjecture for cubic graphs and in the remainder, we will mainly focus on them.
Note that since cubic graphs cannot be $4$-edge-connected, their Frank number is at least $2$.

H\"{o}rsch and Szigeti proved in~\cite{HS21} that every $3$-edge-connected $3$-edge-colourable graph has Frank number at most $3$. We remark that such graphs are always cubic.
We strengthen this result by showing that these graphs have Frank number equal to $2$.
In fact, we prove the following more general theorem. 

\begin{theorem}\label{thm:4flow}
If $G$ is a $3$-edge-connected graph admitting a nowhere-zero $4$-flow, then $fn(G)\le2$. In particular, $fn(G)=2$ for every $3$-edge-connected $3$-edge-colourable graph~$G$.
\end{theorem}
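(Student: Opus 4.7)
The plan is to exploit the standard description of a nowhere-zero $4$-flow as a nowhere-zero $\mathbb{Z}_2\times\mathbb{Z}_2$-flow, which is equivalent to the existence of two even subgraphs $A$ and $B$ of $G$ with $A\cup B=E(G)$. In the cubic $3$-edge-colourable special case one may simply take $A=M_2\cup M_3$ and $B=M_1\cup M_3$ from a proper $3$-edge-colouring $\{M_1,M_2,M_3\}$, giving two spanning $2$-factors whose circuits have even length and with $A\cap B=M_3$. A useful structural consequence is that every edge-cut of $G$ meets $A$, $B$ and $A\cap B$ in prescribed parities; for a cubic graph admitting a nowhere-zero $4$-flow this specialises to: every $3$-edge-cut contains exactly one edge from each of $A\setminus B$, $A\cap B$ and $B\setminus A$.

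I would then construct two orientations $o_1$ and $o_2$ of $G$ by giving $A$ an Eulerian orientation in $o_1$ (orienting each circuit of a circuit-decomposition of $A$ cyclically), giving $B$ an Eulerian orientation in $o_2$, and using our earlier Lemma~\ref{lemma:jedna_dva} to orient the complementary edge sets $E(G)\setminus A$ in $o_1$ and $E(G)\setminus B$ in $o_2$. Because every edge-cut intersects an even subgraph in an even number of edges, an Eulerian orientation splits each such cut evenly across the cut. Consequently, for every $e\in A\setminus B$ and every $3$-edge-cut $F$ containing $e$, the two edges of $F\setminus\{e\}$ both lie in $B$ and are oriented oppositely in $o_2$ by Eulericity; hence $e$ is deletable in $o_2$ regardless of the remaining free choices, and symmetrically every $e\in B\setminus A$ is deletable in $o_1$.

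The main obstacle is handling the edges $e\in A\cap B$. For such an edge and any critical $3$-edge-cut $F=\{e,e',e''\}$ we have $e'\in A\setminus B$ and $e''\in B\setminus A$: deletability of $e$ in $o_1$ forces the direction of $e''$ across $F$ to agree with the direction that the Eulerian orientation of $A$ assigns to $e$, while deletability of $e$ in $o_2$ forces an analogous condition on $e'$. A single edge $e''$ may appear in many such cuts, so these forcings must be satisfied simultaneously, and a na\"ive choice can easily fail; this is precisely the role of Lemma~\ref{lemma:jedna_dva}, which supplies enough control over the complementary orientations so that, for every $e\in A\cap B$, at least one of the two orientations realises the required forcing at every $3$-edge-cut through $e$. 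Combined with the analysis of the previous paragraph, this shows that every edge of $A\cup B=E(G)$ is deletable in $o_1$ or $o_2$, so $fn(G)\le 2$.

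The final assertion is immediate: a cubic graph admits a nowhere-zero $4$-flow if and only if it is $3$-edge-colourable, so the first part of the theorem applies to every $3$-edge-connected $3$-edge-colourable graph; conversely, as noted in the introduction, no cubic graph is $4$-edge-connected, so $fn(G)\ge 2$ always holds in this case, yielding equality.
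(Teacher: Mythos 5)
Your starting point is the same as the paper's (decompose a nowhere-zero $\mathbb{Z}_2\times\mathbb{Z}_2$-flow into two even subgraphs $A$ and $B$ covering $E(G)$, then orient their circuits smoothly), but the proof has a real gap exactly where you flag "the main obstacle." You observe correctly that each edge of $A\cap B$ receives a forced condition from each $3$-edge-cut through it, and that these forcings can conflict; but you then assert, without any construction, that Lemma~\ref{lemma:jedna_dva} "supplies enough control over the complementary orientations" to satisfy them simultaneously. Lemma~\ref{lemma:jedna_dva} does not do this: it is a one-way criterion (an edge is deletable in the underlying orientation of an all-positive flow if it has value $1$, or is a strong $2$-edge), and it says nothing about how to choose orientations so that both members of a pair cover all of $A\cap B$. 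You have not actually produced the two orientations.

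The missing idea in the paper is to build two complementary \emph{flows}, not merely two orientations, and to choose the coefficients so that edges of $A\cap B$ automatically land on value $1$ in exactly one of them. Concretely: put flow values $1$ on $G_1$ and $2$ on $G_2$ and take the positive combination $(o,f)$; then on $A\cap B$ (the paper's $C$) the value is $1$ whenever the two smooth orientations disagree. Then put values $2$ on $G_1$ and $-1$ on $G_2$ to get $(o',f')$; now the value on $A\cap B$ is $1$ whenever the orientations agree. Every edge gets value $1$ in one of the two all-positive flows, and Lemma~\ref{lemma:jedna_dva} finishes. Your appeal to a case analysis of $3$-edge-cuts is also insufficient for a deletability argument: deletability of $e$ requires that \emph{every} cut of $G-e$ contain edges in both directions, not just those cuts of $G$ of size three through $e$; this is handled correctly by the flow-balance argument inside Lemma~\ref{lemma:jedna_dva}, which is why the paper works through flows rather than directly through orientations. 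Finally, note the theorem is stated for general $3$-edge-connected graphs, and your parity bookkeeping on $3$-edge-cuts ("exactly one edge from each of $A\setminus B$, $A\cap B$, $B\setminus A$") is a cubic-only statement; the flow construction avoids needing it.
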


It is also verified in~\cite{BB24_quest} that several well-known infinite families of 3-edge-connected graphs have Frank number 2. This includes wheel graphs, M\"obius ladders, prisms, flower snarks and an infinite subset of the generalised Petersen graphs. Note that except for the wheel graphs and flower snarks, these families all consist of $3$-edge-colourable graphs. In the same paper it is also conjectured that every $3$-edge-connected hamiltonian cubic graph has Frank number $2$. Since every hamiltonian cubic graph is $3$-edge-colourable, Theorem~\ref{thm:4flow} also proves this conjecture.

The main tool in the proofs of the two mentioned results make use of  nowhere-zero integer flows. We give a sufficient condition for an edge to be deletable in an orientation which is the underlying orientation of some all-positive nowhere-zero $k$-flow and construct two specific nowhere-zero $4$-flows that show that the Frank number is $2$.

Moreover, we also give two sufficient conditions for cyclically $4$-edge-connected cubic graphs to have Frank number $2$. We propose a heuristic algorithm and an exact algorithm for determining whether the Frank number of a $3$-edge-connected cubic graph is 2. The heuristic algorithm makes use of the sufficient conditions mentioned earlier.
Using our implementation of these algorithms we show that the Petersen graph is the only cyclically $4$-edge-connected cubic graph up to $36$ vertices with Frank number greater than $2$. This implies a 
positive answer for Conjecture~\ref{conjecture1} up to this order in the family of cubic graphs.

After the introduction and preliminaries, our paper is divided into two main sections: Section~\ref{sec:theoretical_results} which is devoted to theoretical results and Section~\ref{sec:algorithm} which focuses on the algorithmic aspects of this problem. More precisely, in Section~\ref{sec:theoretical_results} we first prove our key Lemma~\ref{lemma:jedna_dva} and use it to prove Theorems~\ref{thm:general} and \ref{thm:4flow}. Here, we also provide sufficient conditions for a cubic graph to have Frank number 2. In Section~\ref{sec:algorithm} we describe the algorithms and use them to check Conjecture~\ref{conjecture1} for nontrivial non-3-edge-colourable cubic graphs up to 36 vertices. Together with our theoretical results this proves that there is no cubic counterexample to Conjecture~\ref{conjecture1} up to 36 vertices. 

\subsection{Preliminaries}

Let $\mathcal{H}$ be an abelian group. An \emph{$\mathcal{H}$-flow $(o,f)$} on a graph $G$ consists of an orientation $(G,o)$ and a valuation $f:E(G)\rightarrow \mathcal{H}$ assigning elements of $\mathcal{H}$ to the edges of $G$ in such a way that for every vertex $v$ of $G$ the sum of the values on the incoming edges is the same as the sum of the values on the outgoing edges from $v$. A $\mathbb{Z}$-flow is called a \textit{$k$-flow} if the function $f$ only takes values in $\{0,\pm1,\pm2\ldots,\pm(k-1)\}$. 
An $\mathcal{H}$-flow $(o,f)$ (or a $k$-flow) is said to be \emph{nowhere-zero} if the value of $f$ is not the identity element $0\in \mathcal{H}$ ($0\in\mathbb{Z}$) for any edge of $E(G)$.



A nowhere-zero $k$-flow on $G$ is said to be \emph{all-positive} if the value $f(e)$ is positive for every edge $e$ of $G$. Every nowhere-zero $k$-flow can be transformed to an all-positive nowhere-zero $k$-flow by changing the orientation of the edges with negative $f(e)$ and changing negative values of $f(e)$ to $-f(e)$.


 Let $(G, o)$ be an orientation of a graph $G$. Let $H$ be a subgraph of $G$. If the context is clear we write $(H,o)$ to be the orientation of $H$ where $o$ is restricted to $H$. We define the set $D(G, o)\subseteq E(G)$ to be the set of all edges of $G$ which are deletable in $(G,o)$. Let $u, v\in V(G)$, if the edge $uv$ is oriented from $u$ to $v$, we write $u\rightarrow v$.

In the following proofs we will combine two flows into a new one as follows. Let $(o_1,f_1)$ and $(o_2,f_2)$ be $k$-flows on subgraphs $G_1$ and $G_2$ of a graph $G$, respectively. For $i\in\{1,2\}$ we extend the flow $(o_i,f_i)$ to flows on $G$, still called $(o_i,f_i)$, by setting the value of $f_i$ to be 0 and setting the orientation $o_i$ arbitrarily for the edges not in $G_i$. For those edges $e$ of $G$ where $o_1(e)\neq o_2(e)$, we change both the orientation of $o_2(e)$ and the value $f_2(e)$ to $-f_2(e)$ thereby transforming $(o_2,f_2)$ to a flow $(o_1, f_2')$. The \emph{combination} of flows $(o_1,f_1)$ and $(o_2,f_2)$ is the flow $(o_1,f_1+f_2')$ on $G$. Transforming this obtained flow to an all-positive flow, we get a flow $(o, f)$ on $G$, which we call the \emph{positive combination} of flows $(o_1,f_1)$ and $(o_2,f_2)$.

A \emph{smooth orientation} of a set of edge-disjoint circuits is an orientation such that for every circuit in the set, one edge is incoming and one edge is outgoing at every vertex in the circuit.

Let $G_1$ and $G_2$ be two subgraphs of $G$ and let $(G_1,o_1)$ and $(G_2,o_2)$ be two orientations. Let $Z\subseteq E(G_1)\cap E(G_2)$. We say that $(G_1,o_1)$ and $(G_2,o_2)$ are \emph{consistent} on $Z$ if $o_1$ and $o_2$ agree on all the edges from $Z$. 

\section{Theoretical results}\label{sec:theoretical_results}

Let $(o,f)$ be an all-positive nowhere-zero $k$-flow on a cubic graph $G$. An edge $e$ with $f(e)=2$ is called a \emph{strong $2$-edge} if $G$ has no 3-edge-cut containing the edge $e$ such that the remaining edges of the cut have value 1 in $f$.

\begin{lemma} \label{lemma:jedna_dva}
Let $G$ be a $3$-edge-connected graph and let $(o,f)$ be an all-positive nowhere-zero $k$-flow on $G$ for some integer $k$. Then all edges of $G$ which receive value 1 and all strong $2$-edges in $(o,f)$ are deletable in~$o$.
\end{lemma}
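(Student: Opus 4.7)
The plan is to use the observation (noted in the introduction) that an orientation is strong if and only if every edge cut contains edges in both orientations. Hence to show $e$ is deletable in $(G,o)$ we must show that every edge cut of $G-e$ has edges oriented both ways. Any edge cut of $G-e$ comes from a vertex bipartition $(A,B)$ and equals $C\setminus\{e\}$, where $C=E_G(A,B)$. Write $C^+$ and $C^-$ for the edges of $C$ oriented from $A$ to $B$ and from $B$ to $A$ respectively; the flow conservation at $A$ yields the balance equation $\sum_{C^+}f=\sum_{C^-}f$.

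When $e\notin C$, flow balance together with positivity of $f$ forces both $C^+$ and $C^-$ to be nonempty (the cut $C$ is nonempty by 3-edge-connectivity), so $C$ itself is already mixed. The interesting case is $e\in C$; assume without loss of generality $e\in C^+$. Balance then reads $\sum_{C^+\setminus\{e\}}f + f(e) = \sum_{C^-}f$, which already gives $\sum_{C^-}f\ge f(e)\ge 1$ and hence $C^-\neq\emptyset$. What remains is to prove $C^+\setminus\{e\}\neq\emptyset$.

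For the two cases I would argue by contradiction, assuming $C^+=\{e\}$ and using 3-edge-connectivity (and, in the value-2 case, the definition of a strong $2$-edge). If $f(e)=1$, then $\sum_{C^-}f=1$; since every $f$-value is a positive integer, this forces $|C^-|=1$, making $|C|=2$, contradicting that $G$ is 3-edge-connected. If $e$ is a strong 2-edge, then $\sum_{C^-}f=2$, so either $|C^-|=1$ with the single edge carrying value $2$, yielding again $|C|=2$ against 3-edge-connectivity, or $|C^-|=2$ with both edges carrying value $1$, yielding a 3-edge-cut containing $e$ whose other two edges carry value $1$ — this is precisely the configuration forbidden by the definition of a strong 2-edge.

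The only real obstacle is the strong 2-edge case, which is exactly why the definition of strong 2-edges isolates the cut $\{e,e_1,e_2\}$ with $f(e_1)=f(e_2)=1$: this is the one configuration that flow balance plus 3-edge-connectivity alone cannot exclude. Once that case is ruled out by hypothesis, the conclusion is immediate. I expect the write-up to be short; the technical substance is entirely in correctly bookkeeping the two sides of the cut and spotting that the strong 2-edge condition is tailored precisely to the bad configuration above.
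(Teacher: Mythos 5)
Your proof is correct and takes essentially the same approach as the paper: both rely on the cut criterion for strong connectivity combined with flow conservation across a cut. The only cosmetic difference is that the paper starts from a hypothetical non-deletable edge and constructs one bad cut (all edges of $G-e$ crossing in the same direction), whereas you quantify over all cuts of $G-e$ and show each is mixed; the flow-balance bookkeeping and the use of 3-edge-connectivity (and, for the value-2 case, the strong-2-edge condition) are identical.
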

\begin{proof}
    Let $e$ be an edge with $f(e)=1$.
    Suppose that there exist two vertices of $G$, say $u$ and $v$, such that there is no oriented $uv$-path in $(G-e,o)$. Let $W$ be the set of vertices of $G$ to which there exists an oriented path from $u$ in $(G-e, o)$. Obviously, we have $u\in W$ and $v\not\in W$. Let $W'=V(G)-W$. Let us look at the edge-cut $S$ of $G-e$ between $W$ and $W'$. All the edges in $S$ must be oriented from $W'$ to $W$, otherwise for some vertex from $W'$ there would exist an oriented path from $u$ to this vertex. Moreover, as $G$ is $3$-edge-connected, we have $|S|\ge2$.
    
    Now consider the edge-cut $S^*$ between $W$ and $W'$ in $G$; either $S^*=S$ or $S^*=S\cup \{e\}$. Recall that $(o,f)$ is an all-positive nowhere-zero flow. Since $(o,f)$ is a flow, it holds that on any edge-cut the sum of the values on the edges oriented in one direction equals the sum of the values on the edges oriented in the other direction. Since all the edges of $S$ are oriented in the same direction and have non-zero value, it cannot happen that $S^*=S$. So it must be the case that $S^*=S\cup \{e\}$ and all the edges of $S$ are oriented in the same direction and $e$ is oriented in the opposite orientation. But $f(e)=1$ and since $|S|\ge2$ and all the values of $f$ on the edges of $S$ are positive, this cannot happen either. Therefore we conclude that for any two vertices $u$ and $v$ there exists an oriented $uv$-path in $(G-e, o)$ and so $e$ is deletable.
    
    Assume now that $e$ is a strong $2$-edge and suppose that $e$ is not deletable. We define the cuts $S$ in $G-e$ and $S^*$ in $G$ similarly as above. Since $f$ is all-positive and $G$ is 3-edge-connected, $S^*$ has to contain exactly three edges, $e$ and two edges in $S$ oriented oppositely from $e$ and valuated 1. But this is impossible since $e$ is a strong 2-edge. Therefore $e$ is deletable. 
\end{proof}

\subsection{The Frank number of graphs with a nowhere-zero 4-flow} \label{sect:3edgecol}

In this section, we prove Theorem~\ref{thm:4flow}.
In the proof we utilize Lemma~\ref{lemma:jedna_dva} and carefully apply the fact that every nowhere-zero $4$-flow can be expressed as a combination of two $2$-flows.

\medskip

\begin{proof}[Proof of Theorem~\ref{thm:4flow}]
Since $G$ admits a nowhere-zero $4$-flow, it also admits a nowhere-zero $(\mathbb{Z}_2\times\mathbb{Z}_2)$-flow by a famous result of Tutte~\cite{Tu54}. Let us denote by $A$ the set of edges with value $(0,1)$, by $B$ the set of edges with value $(1,0)$ and by $C$ the set of edges with value $(1,1)$ in this flow. Note that by the nowhere-zero property there are no edges with value $(0,0)$.

Consider the subgraph $G_1$ of $G$ induced by $A\cup C$. Since their edges all had a flow value with a $1$ in the first coordinate and Kirchhoff's law holds around every vertex in $G$, $G_1$ is \emph{Eulerian}, i.e.\ the degree of every vertex of $G_1$ is even. Therefore, $G_1$ consists of edge-disjoint circuits. Note that a vertex can belong to more than one circuit. Similarly, the subgraph $G_2$ induced by $B\cup C$ is Eulerian and so consists of edge-disjoint circuits.

Now fix a smooth orientation $(G_1, o_1)$ of the circuits in $G_1$ and a smooth orientation $(G_2, o_2)$ of the circuits in $G_2$. Set the value $f_i$ to be $i$ for the edges lying in $G_i$. Denote by $(o,f)$ the positive combination of the flows $(o_1, f_1)$ and $(o_2,f_2)$. The value 1 in $(o,f)$ is on all the edges of $A$ and on those edges of $C$ that have different orientation in $(G_1,o_1)$ and $(G_2,o_2)$.

Now we construct a complementary all-positive nowhere-zero 4-flow on $G$ in a sense that this flow will have value 1 exactly on those edges where $(o,f)$ had not. 
For $i\in\{1,2\}$ we set $o_i'=o_i$ on $G_i$. We set $f_1'(e)=2$ if $e\in G_1$ and $f_2'(e)=-1$ if $e\in G_2$. We create a flow $(o',f')$ of $G$ as the positive combination of the flows $(o_1', f_1')$ and $(o_2',f_2')$.

Summing up, we have constructed two all-positive nowhere-zero 4-flows on $G$, namely flows $(o,f)$ and $(o',f')$. The edges of $A$ are valuated 1 in $(o,f)$. The edges of $B$ are valuated 1 in $(o',f')$. The edges $e$ of $C$ where $o_1(e) \neq o_2(e)$ are valuated 1 in $(o,f)$. The edges $e$ of $C$ where $o_1(e) = o_2(e)$ are valuated 1 in $(o',f')$. Therefore, each edge has value 1 either in $(o,f)$ or in $(o',f')$ and by Lemma~\ref{lemma:jedna_dva} we have that $fn(G)=2$. 

It is known that a cubic graph is 3-edge-colourable if and only if it admits a nowhere-zero 4-flow. Therefore the second part of the theorem follows.
\end{proof}

Since every hamiltonian cubic graph is $3$-edge-colourable, we have also shown the following conjecture by Bar{\'a}t and Bl{\'a}zsik~\cite{BB24_quest}.
\begin{corollary}
    If $G$ is a $3$-edge-connected cubic graph admitting a hamiltonian cycle, then $fn(G)=2$.
\end{corollary}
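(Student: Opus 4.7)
The plan is to reduce this corollary directly to Theorem~\ref{thm:4flow} by showing that every hamiltonian cubic graph is $3$-edge-colourable; then Theorem~\ref{thm:4flow} yields $fn(G)\le 2$, and the lower bound $fn(G)\ge 2$ is already noted in the introduction (cubic graphs are not $4$-edge-connected, so a single orientation cannot make every edge of a $3$-edge-cut deletable).

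For the $3$-edge-colouring, I would argue as follows. Every cubic graph has an even number of vertices (by the handshake lemma), so a hamiltonian cycle $H$ of $G$ has even length. Colour its edges alternately with two colours; since $H$ is an even circuit, this is a proper $2$-edge-colouring of $H$. Each vertex of $G$ has exactly one edge in $E(G)\setminus E(H)$, so these remaining edges form a perfect matching, which we can colour with a third colour. This yields a proper $3$-edge-colouring of $G$.

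Once $3$-edge-colourability is in hand, Theorem~\ref{thm:4flow} immediately gives $fn(G)\le 2$, and together with the trivial lower bound $fn(G)\ge 2$ we conclude $fn(G)=2$. There is essentially no obstacle here: both components of the argument (the even-length observation and the application of the theorem) are routine, and the corollary is a direct specialisation of the theorem using the standard equivalence between the existence of a nowhere-zero $4$-flow and $3$-edge-colourability for cubic graphs already cited at the end of the proof of Theorem~\ref{thm:4flow}.
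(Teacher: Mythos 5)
Your proof is correct and takes exactly the same route as the paper: observe that a hamiltonian cubic graph is $3$-edge-colourable and invoke Theorem~\ref{thm:4flow}. The paper simply states the $3$-edge-colourability fact without spelling out the alternating-colouring argument, but your filling-in of that routine step is accurate.
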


\subsection{A general upper bound for the Frank number}\label{sect:general_upper_bound}
In this section we prove Theorem~\ref{thm:general} and thereby improve the previous general upper bound by H\"orsch and Szigeti from 7 to 4. We note that in~\cite{BB24} Bar{\'a}t and Bl{\'a}zsik recently independently proved that $fn(G)\leq 5$. As a main tool we again use Lemma~\ref{lemma:jedna_dva}.


\medskip

\textit{Proof of Theorem~\ref{thm:general}.\ }
    By a result of Bar{\'a}t and Bl{\'a}zsik it is sufficient to prove the theorem for $3$-edge-connected cubic graphs.

    Let $G$ be a $3$-edge-connected cubic graph. By Seymour's $6$-flow theorem~\cite{Se81}, $G$ has a nowhere-zero $(\mathbb{Z}_2\times \mathbb{Z}_3)$-flow $(o,f)$.
    The edges $e\in E(G)$ for which $f(e)$ is non-zero in the first coordinate induce a subgraph $D$. As every vertex in $G$ can either have none or two of such edges, $D$ is a set of vertex-disjoint circuits.
    The edges $e\in E(G)$ for which $f(e)$ is non-zero in the second coordinate induce a subgraph $H'$. As $H'$ admits a nowhere-zero $3$-flow there are no vertices of degree $1$ in $H'$, hence $H'$ consists of a set of vertex-disjoint circuits and a subdivision of a cubic graph $H$.
    The graph $H$ is bipartite since it is cubic and has a nowhere-zero $3$-flow.

    We will create four all-positive nowhere-zero $k$-flows on $G$ using the subgraphs $D$ and $H'$ such that each edge is valuated $1$ in at least one of the flows. Then Lemma~\ref{lemma:jedna_dva} will imply the result.

    Since $H$ is cubic and bipartite, by K\H{o}nig's line colouring theorem~\cite{Ko16}, it is $3$-edge-colourable. Hence, it admits a proper edge-colouring with colours $a,b,c$. Fixing such a colouring $\varphi$, we find a (not-necessarily proper) edge-colouring $\varphi'$ on $H'$ as follows. If an edge $e$ in $H$ corresponds to a path $P$ in $H'$, we colour all the edges of $P$ in $H'$ by $\varphi(e)$. All the edges lying on circuits in $H'$ will receive the colour $a$ in $\varphi'$. 
    Denote the set of edges with colour $a, b$, or $c$ in $\varphi'$ in $H'$ by $A, B$, or $C$, respectively.

    Now fix a smooth orientation $(D, o_D)$ of the circuits in $D$ and an orientation of $H$ by directing all edges from one partite set to the other. We can find an orientation $(H',o_{H'})$ of $H'$ as follows. Each oriented edge in $H$ will correspond to an oriented path in $H'$, oriented in the same direction. We take any smooth orientation of the circuits of $H'$.
    This fixes an orientation $(H',o_{H'})$ of $H'$.
 
    We now partition the edges of $G$ based on the orientations and colors of $D$ and $H'$. Denote by $D_0$ the set of edges which lie only in $D$. Denote by $A_0$ the set of edges which lie only in $A$, by $A_{+}$ the set of edges $e$ lying in $A$ and $D$ such that $o_{H'}$ and $o_D$ have the same direction for $e$ and by $A_{-}$ the set of edges $e$ in $A$ and $D$ such that $o_{H'}$ and $o_D$ direct $e$ oppositely. Similarly, we define $B_0$, $B_{+}$, $B_{-}$ and $C_0$, $C_{+}$ and $C_{-}$. It is easy to check that every edge belongs to exactly one of $D_0$, $A_0$, $A_{+}$, $A_{-}$, $B_0$, $B_{+}$, $B_{-}$, $C_0$, $C_{+}$, and $C_{-}$.

    Each of the four nowhere-zero flows $(o_i, h_i)$ for $i\in\{1,2,3,4\}$ will be the positive combination of a flow on $D$ and a flow on $H'$. In each of the four cases cases we proceed as follows. 
    
    We define flows $(o_{i,1}, g_{i,1})$ on $D$ for $i\in\{1,2,3,4\}$. For edges $e\in E(D)$, let $o_{i,1}(e) = o_D(e)$ and $g_{i,1}(e) = g_{i,D}$ where $g_{i,D}$ is the value according to Table~\ref{tab:4Flows}.
    
    We define flows $(o_{i,2}, g_{i,2})$ on $H'$ for $i\in\{1,2,3,4\}$. For edges $e\in E(H')$, let $o_{i,2}(e) = o_{H'}(e)$ and let $g_{i,2}(e)$ equal $g_{i,A}$, $g_{i,B}$ or $g_{i,C}$ if $e$ is in $A$, $B$ or $C$, respectively,  where $g_{i,A}$, $g_{i,B}$, and $g_{i,C}$ are three values that sum to $0$, according to Table~\ref{tab:4Flows}.
    
    The flow $(o_i, h_i)$ will be the positive combination of $(o_{i,1}, g_{i,1})$ and $(o_{i,2}, g_{i,2})$. For each $i\in\{1,2,3,4\}$ the flow values are given in Table~\ref{tab:4Flows}.



    Since for each $i\in\{1,2,3,4\}$, the sum of $g_{i,A}$, $g_{i, B}$ and $g_{i,C}$ is zero, it is easy to see that $(o_i, h_i)$ is a $k$-flow. Moreover, as $g_{i, D}$, $g_{i,A}$, $g_{i, B}$ and $g_{i,C}$ are non-zero and $\lvert g_{i, D}\lvert$ differs from $\lvert g_{i,A}\rvert$, $\lvert g_{i, B}\rvert$ or $\lvert g_{i,C}\rvert$, we see that each $(o_i, h_i)$ is nowhere-zero.

    Finally, one can see that for every edge $e$, $h_i(e)$ is $1$ for at least one $i$.
    The result follows. \hfill $\Box$

    \begin{table}[!htb]
        \centering
        \begin{tabular}{c | c c c c | c}
            Flow & $g_{i,D}$ & $g_{i,A}$ & $g_{i,B}$ & $g_{i,C}$ & $h_i(e) = 1$\\\hline
            $(o_1, h_1)$ & $1$ & $2$ & $2$ & $-4$ & $e\in D_0\cup {A_{-}} \cup {B_{-}}$\\
            $(o_2, h_2)$ & $3$ & $1$ & $1$ & $-2$ & $e\in A_0\cup B_0 \cup {C_{+}}$\\
            $(o_3, h_3)$ & $2$ & $3$ & $-4$ & $1$ & $e\in C_0\cup {A_{-}}\cup {C_{-}}$\\
            $(o_4, h_4)$ & $2$ & $-3$ & $-1$ & $4$ & $e\in {A_{+}}\cup {B_{+}}\cup B_0$
        \end{tabular}
        \caption{Four nowhere-zero $k$-flows defined by the procedure described in the proof of Theorem~\ref{thm:general}. Each $(o_i, h_i)$ is the positive combination of a flow on $D$ having value $g_{i,D}$ on each edge and a flow on $H'$, whose edge set can be partitioned into sets $A$, $B$ and $C$, having value $g_{i,A}$ on edges of $A$, $g_{i,B}$ on edges of $B$ and $g_{i,C}$ on edges of $C$. These values are found on the $i$'th row in their respective column. The final column indicates which edges obtain value $1$ in each of the positive combinations.}
        \label{tab:4Flows}
    \end{table}

\subsection{Reduction to cubic graphs}\label{subsec:reduction_cubic}
Barát and Blázsik showed in~\cite{BB24_quest} that for any $3$-edge-connected graph $G$, there exists a $3$-edge-connected cubic graph $H$ with $fn(H)\geq fn(G)$. Using the notion of \emph{local cubic modification}, i.e.\ replace a vertex $v$ of degree $d\geq 3$ with a circuit $v_1v_2\ldots v_d$ and replace each edge $vx_i$, where $x_1,\ldots, x_d$ are the neighbours of $v$ with an edge $v_jx_i$ such that every $v_i$ has degree $3$. Starting with a graph $G$ and applying this operation gives the local cubic modification $G_v$ of $G$ at $v$. Note that $G_v$ is not unique and depends on the perfect matching chosen between $v_1,\ldots, v_d$ and $x_1,\ldots, x_d$.

We extend this result to cyclically $4$-edge-connected $3$-edge-connected graphs.

\begin{lemma}
    For any cyclically $4$-edge-connected $3$-edge-connected graph $G$ and an arbitrary vertex $v\in V(G)$ of degree at least $4$. There exists a local cubic modification $G_v$ of $G$ at $v$ such that $G_v$ is cyclically $4$-edge-connected and $3$-edge-connected. 
\end{lemma}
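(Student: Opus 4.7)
I would choose the matching $\pi$ between the cycle vertices $v_1,\ldots,v_d$ and the neighbors $x_1,\ldots,x_d$ of $v$ so that no $3$-edge cut of $G_v$ violates cyclic $4$-edge-connectivity. Note that $3$-edge-connectivity of $G_v$ comes for free: either a cut of $G_v$ has all $v_i$ on one side (and then projects back to an edge cut of $G$ of the same size), or it splits the replacement cycle (and then contains at least two cycle edges plus at least one matching edge whose endpoints are separated), so its size is at least $3$.

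For cyclic $4$-edge-connectivity I would distinguish two types of candidate bad $3$-edge cuts of $G_v$. A \emph{projected} cut has all $v_i$ on a single side and so projects to a $3$-edge cut of $G$. A short degree-sum argument shows that $v$ cannot lie on the acyclic side of any $3$-edge cut of $G$: such a side $T$ is a forest, so the sum of degrees of its vertices in $G$ is at most $2(|T|-1)+3=2|T|+1$, whereas $\deg_G(v)\geq 4$ together with the minimum-degree-$3$ condition gives at least $3|T|+1$ when $v\in T$. Hence in $G_v$ the replacement cycle sits on the cyclic side of every projected cut while the opposite side is unchanged from $G$ and remains acyclic, so projected cuts are harmless.

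A \emph{split} cut crosses the replacement cycle exactly twice, so $I:=A\cap\{v_1,\ldots,v_d\}$ and $J:=B\cap\{v_1,\ldots,v_d\}$ are complementary arcs and exactly one further edge is cut: either a matching edge or a pre-existing edge of $G$ between $A_0:=A\setminus\{v_1,\ldots,v_d\}$ and $B_0$. A direct count pins down $\pi(P)$, where $P:=\{j:x_j\in A_0\}$, to be an arc of size $|P|$ or to differ from one by a single element. Moreover, $G_v[A]$ is cyclic if and only if $G[A_0\cup\{v\}]$ is cyclic — any cycle in $G_v[A]$ either lives inside $G[A_0]$ or threads along the arc $I$ and through two $v$-neighbours in a common component of $G[A_0]$ — and symmetrically for $B$.

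The main obstacle, and the crux of the proof, is choosing $\pi$ to kill every split cut simultaneously. Combining the structural translation above with cyclic $4$-edge-connectivity of $G$ applied to the associated edge cuts $(A_0\cup\{v\},B_0)$ and $(A_0,B_0\cup\{v\})$ of $G$ leaves only finitely many candidate partitions $(A_0,B_0)$ of $V(G)\setminus\{v\}$ that could produce a bad split cut; in particular the extreme values $|P|\in\{0,1,d-1,d\}$ are already ruled out by $3$-edge-connectivity of $G$. For each remaining partition, only matchings sending $P$ essentially to an arc of the replacement cycle are dangerous, and the fraction of such matchings drops off rapidly as $|P|$ moves away from the extremes. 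A counting argument, or equivalently a greedy swap procedure that exchanges two matching edges to destroy a bad configuration without creating new ones, then exhibits a matching $\pi$ avoiding every bad split cut, completing the proof.
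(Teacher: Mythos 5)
Your reduction to two cut types (projected vs.\ split) is the right framework, and your degree-sum argument ruling out $v$ on the acyclic side of a projected cut is a clean alternative to the paper's argument (which instead argues that the acyclic component would have to be the single vertex $v$). However, there are two genuine problems with the proposal.

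First, the opening claim that ``$3$-edge-connectivity of $G_v$ comes for free'' is false as stated. You assert that a cut splitting the replacement circuit must contain a matching edge, but this fails exactly when $v$ is a cut vertex of $G$ (which is compatible with $G$ being cyclically $4$-edge-connected and $3$-edge-connected, e.g.\ when $v$ has at least $4$ edges to each component of $G-v$): if a contiguous arc of the circuit is matched to one component of $G-v$, the two circuit edges bounding that arc already form a $2$-edge-cut of $G_v$. So $3$-edge-connectivity, like cyclic $4$-edge-connectivity, genuinely depends on the choice of matching, and the paper is careful to handle $2$-edge-cuts and cyclic $3$-edge-cuts together.

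Second, and more seriously, the crux of the lemma — producing a matching that avoids all bad split cuts simultaneously — is only gestured at (``a counting argument, or equivalently a greedy swap procedure''). Neither is carried out, and neither is as routine as you suggest: a greedy swap needs a monovariant showing it terminates without recreating old bad cuts, and the counting heuristic that ``the fraction drops off rapidly'' is not verified and is in fact delicate near small $|P|$ once the one-misplaced-element variant is allowed. The paper's proof replaces this with a concrete structural construction: it takes the bridge-block forest of $G-v$, labels its leaf blocks $K_1,\ldots,K_k$, observes that $3$-edge-connectivity forces $|V(K_i)\cap N_G(v)|\ge 2$ (hence $d\ge 2k$), and then explicitly assigns $v_i$ and $v_{k+i}$ to vertices of $K_i$ so that the two circuit vertices matched into each leaf block are antipodally spread on $C_v$. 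With that choice, any candidate split cut separating some $K_{i_1}$ from some $K_{i_2}$ would force two circuit edges to lie in two disjoint arcs whose complement still connects $v_{k+i_1}$ to $v_{k+i_2}$, giving a contradiction. This structural insight — the bridge-block decomposition of $G-v$ as the organizing object — is absent from your plan, and without it the ``finitely many candidate partitions'' you identify cannot be dispatched uniformly.
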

\begin{proof}
    Fix a local cubic modification $G_v$ of $G$. If $G_v$ contains a bridge, then we have a bridge in $G$, so $G_v$ is $2$-edge-connected. Barát and Blázsik showed in~\cite[Lemma~4.1]{BB24_quest}
    that a $2$-edge-cut must intersect $C_v = v_1\ldots, v_d$ twice and hence $v$ is a cut vertex of $G$.
    
    Now suppose that $G_v$ contains a cyclic $3$-edge-cut $X$. 
    We show that in this case $X$ must also intersect the circuit $C_v = v_1\ldots, v_d$ exactly twice. A cut must intersect $C_v$ an even number of times, so suppose that $X$ does not intersect $C_v$. If $G_v - X$ has circuits which are not $C_v$ in each component, then $X$ is also a cyclic $3$-edge-cut of $G$, which is a contradiction. Hence, one of the components of $G_v - X$ can only have $C_v$ as a circuit. This corresponds to an acyclic component of $G - X$ in which all but the vertices incident with $X$ in $G$ have degree at least $3$, since $G$ is $3$-edge-connected. However, this can only happen if the acyclic component is the single vertex $v$, in contradiction with the fact that $v$ has degree at least $4$. Hence, a cyclic $3$-edge-cut in $G_v$ for any choice of perfect matching must intersect $C_v$ twice. We conclude that if $G_v$ contains a 2-edge cut or a cycle separating 3-edge cut $X$, the cut $X$ is intersected by $C_v$ in two edges.

    We see that $G - v$, which is not necessarily connected,
    consists of $2$-edge-connected components or single vertices which are connected to other components by bridges of $G - v$. We label all such components connected to at most one other via a bridge in $G-v$ by $K_1, \ldots, K_k$. Note that by $3$-edge-connectivity for any $K_i$, we have $\lvert V(K_i)\cap N_G(v)\rvert\geq 2$.


    We now construct a perfect matching such that no two or three edges of $G_v$ define a $2$-edge-cut or a cyclic $3$-edge-cut of $G_v$, respectively. By the previous remark, we have $d\geq 2k$. For $i\in \{1,\ldots, k\}$, connect $v_i$ with a vertex of $K_i$, for $i\in \{k+1, \ldots, 2k\}$, connect $v_i$ with a vertex of $K_{i-k}$, connect the remaining vertices of $C_v$ arbitrarily to the remaining neighbours of $v$ in $G$. 
    Let $X$ be a $2$-edge-cut or a cyclic $3$-edge-cut of $G_v$.
    Then it must separate a vertex $x$ in some $K_{i_1}$ from a vertex $y$ in some $K_{i_2}$, with $i_1 < i_2$. Clearly, $x$ is connected to $v_{i_1}$, so one edge of $X$ is $v_{i'_1}v_{i'_1+1}$ with $i_1 \leq i'_1 < i'_1+1 \leq i_2$. Otherwise, $x$ and $y$ are are in the same component of $G_v-X$. Similarly, the other edge of $X$ must be $v_{i'_2}v_{i'_2+1}$ with $i_2\leq i'_2<i'_2+1 \leq k+i_1$. However, $v_{k+i_1}$ is still connected to $v_{k+i_2}$ on $C_v$, hence $x$ and $y$ are connected. 
\end{proof}

Using Barát and Blázsik's Lemma~4.3 from~\cite{BB24_quest}, we obtain the following corollary.

\begin{corollary}\label{cor:cubic_extension}
    Let $G$ be a cyclically $4$-edge-connected $3$-edge-connected graph, then there exists a cyclically $4$-edge-connected cubic graph $H$ with $fn(H)\geq fn(G)$.
\end{corollary}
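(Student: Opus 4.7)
The plan is to prove the corollary by induction on the number of vertices of degree at least $4$ in $G$, using the preceding lemma as the inductive step together with Barát and Blázsik's Lemma~4.3 from~\cite{BB24_quest}, which guarantees that a local cubic modification can only increase (or preserve) the Frank number.

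First, I would handle the base case: if $G$ has no vertex of degree at least $4$, then $G$ is already cubic (since $G$ is $3$-edge-connected, there are no vertices of degree less than $3$), so we may take $H=G$ and the inequality $fn(H)\ge fn(G)$ is trivially satisfied. For the inductive step, suppose $G$ has at least one vertex $v$ of degree $d\ge 4$. Apply the lemma just proved to obtain a local cubic modification $G_v$ at $v$ that remains cyclically $4$-edge-connected and $3$-edge-connected. Since $v$ is replaced by a circuit $v_1\ldots v_d$ of $d$ new vertices each of degree exactly $3$, the number of vertices of degree at least $4$ strictly decreases. By Barát and Blázsik's Lemma~4.3 we have $fn(G_v)\ge fn(G)$, and the induction hypothesis, applied to $G_v$ (which is cyclically $4$-edge-connected and $3$-edge-connected), produces a cyclically $4$-edge-connected cubic graph $H$ with $fn(H)\ge fn(G_v)\ge fn(G)$.

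The only non-routine ingredient is the existence of a local cubic modification that simultaneously preserves cyclic $4$-edge-connectivity and $3$-edge-connectivity, and this is exactly what the preceding lemma provides; the Frank-number monotonicity is handled entirely by the cited Lemma~4.3. Consequently, no genuine obstacle remains: the corollary follows by assembling these two ingredients along the induction described above.
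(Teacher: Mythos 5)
Your proposal is correct and matches the paper's intended argument: the paper states the corollary as an immediate consequence of the preceding lemma together with Bar\'at and Bl\'azsik's Lemma~4.3, and your induction on the number of vertices of degree at least~$4$ simply spells out the iteration the paper leaves implicit.
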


\subsection{Sufficient conditions for Frank number 2}\label{sect:conditions_fn2}

The following lemmas and theorems give two sufficient conditions for a cyclically $4$-edge-connected cubic graph to have Frank number $2$. These will be used in the algorithm in Section~\ref{sec:algorithm}. We note that for the conditions to hold the graphs need to have a $2$-factor with exactly two odd circuits. For the graphs we consider in Section~\ref{sec:algorithm}, the vast majority will have such a $2$-factor. Even though the ideas we use can possibly be extended to $2$-factors with a higher number of odd circuits, the proofs will be more involved and they will yield little speedup for our computations.

In the proof of the next lemma we will use the following proposition which can be found in \cite{BB24_quest} as Proposition 2.2.

\begin{proposition} \label{prop:uv}
Let $(G,o)$ be a strong orientation of a graph $G$. Assume that an edge $e=uv$ is oriented from u to v in $(G,o)$. The edge $e$ is deletable in $(G, o)$ if and only if there exists an oriented $uv$-path in $(G-e,o)$. 
\end{proposition}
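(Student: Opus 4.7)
The plan is to establish the two directions of the equivalence separately, with the forward one being essentially definitional. If $e$ is deletable in $(G,o)$ then by the definition given in the introduction, the restriction of $o$ to $E(G)-\{e\}$ is a strong orientation of $G-e$, so in particular there is an oriented $uv$-path in $(G-e,o)$. This takes one sentence.

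For the reverse implication, I would fix an oriented $uv$-path $Q$ in $(G-e,o)$ (guaranteed by hypothesis) and prove that $(G-e,o)$ is strong. To that end, pick an arbitrary ordered pair of distinct vertices $x,y\in V(G)$. Since $(G,o)$ is strong there is an oriented $xy$-path $P$ in $(G,o)$. If $P$ avoids $e$, then $P$ already lives in $(G-e,o)$ and we are done. Otherwise $P$ uses $e$, and because $e$ is oriented from $u$ to $v$ in $(G,o)$, the path $P$ must traverse $e$ from $u$ to $v$; hence $P$ decomposes as $P = P_1\cdot e\cdot P_2$ where $P_1$ is an oriented $xu$-path and $P_2$ an oriented $vy$-path, both lying entirely in $G-e$. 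Concatenating $P_1$, $Q$ and $P_2$ yields an oriented $xy$-walk in $(G-e,o)$, and from any oriented walk between two vertices one extracts an oriented path between them by the standard shortcut argument (repeatedly cut out cycles formed between repeated vertex occurrences).

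There is essentially no serious obstacle here; the proposition is better viewed as a convenient reformulation of the definition of deletability than as a substantive theorem. The only point one must be careful about is that when $P$ uses $e$ it traverses it in the correct direction (from $u$ to $v$, not the other way), which is exactly why the hypothesis specifies an oriented $uv$-path rather than a $vu$-path; this is what allows the splice of $Q$ into $P$ to produce a consistently oriented walk.
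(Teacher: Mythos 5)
Your proof is correct, and since the paper itself does not prove this proposition (it is quoted from Bar\'at and Bl\'azsik's work and used as an external result), there is no in-paper argument to compare against. Your splice-and-shortcut argument is the standard one and handles the one delicate point correctly, namely that any oriented $xy$-path through $e$ must traverse $e$ in the direction $u\rightarrow v$, which is what makes the substitution of $Q$ for $e$ produce a consistently oriented $xy$-walk in $G-e$.
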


\begin{lemma} \label{lemma:oo}
  Let $(G,o)$ be a strong orientation of a cubic graph $G$. Let $e_1=u_1v_1$ and $e_2=u_2v_2$ be two non-adjacent edges in $G$ such that $(G,o)$ contains $u_1\rightarrow v_1$ and $u_2 \rightarrow v_2$. Assume that both $e_1$ and $e_2$ are deletable in $(G,o)$. Create a cubic graph $G'$ from $G$ by subdividing the edges $e_1$ and $e_2$ with vertices $x_1$ and $x_2$, respectively, and adding a new edge between $x_1$ and $x_2$. Let $(G',o')$ be the orientation of $G'$ containing $u_1\rightarrow x_1$, $x_1\rightarrow v_1$, $x_1\rightarrow x_2$, $u_2\rightarrow x_2$, $x_2\rightarrow v_2$ and such that $o'(e)=o(e)$ for all the remaining edges of $G'$. Then $$D(G',o')\supseteq (D(G,o)-\{e_1,e_2\})\cup\{x_1v_1,x_1x_2, u_2x_2\}.$$
\end{lemma}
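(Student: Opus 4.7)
The approach is to use Proposition~\ref{prop:uv}: for every arc $e = u \to v$ claimed to be deletable in $(G', o')$, I would exhibit an oriented $uv$-path in $G' - e$. Two facts are used throughout: first, $(G - e_1, o)$ and $(G - e_2, o)$ are strong orientations by the deletability hypothesis, and second, any oriented path in $(G - e_i, o)$ lifts to $(G', o')$ by replacing each use of $e_j$ ($j \neq i$) with the subpath $u_j \to x_j \to v_j$. I would begin by noting that $(G', o')$ is itself strong: $(G, o)$ is strong and lifts entirely to $(G', o')$, while $x_1$ and $x_2$ connect into and out of the original graph through the new arcs.

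For an arc $e \in D(G, o) - \{e_1, e_2\}$, oriented $u \to v$, Proposition~\ref{prop:uv} supplies an oriented $uv$-path in $(G - e, o)$. Since $e \notin \{e_1, e_2\}$, lifting this path to $G'$ only touches the subdivision arcs of $e_1$ and $e_2$ and never $e$ itself, producing an oriented $uv$-path in $G' - e$. Hence $e$ remains deletable in $(G', o')$.

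The three new arcs require slightly more care. For $x_1 \to v_1$, I would concatenate $x_1 \to x_2 \to v_2$ with an oriented $v_2 v_1$-path in $(G - e_1, o)$; the latter exists by strongness and automatically avoids $e_2$, because $e_2$ can only be traversed as $u_2 \to v_2$ and $v_2$ is the path's initial vertex on a simple directed path. For $x_1 \to x_2$, I would take an oriented $v_1 u_2$-path in $(G - e_1, o)$, which avoids $e_2$ because $u_2$ is its terminal vertex, and prepend $x_1 \to v_1$ and append $u_2 \to x_2$. For $u_2 \to x_2$, I would take an oriented $u_2 u_1$-path in $(G - e_2, o)$, which avoids $e_1$ symmetrically (since $u_1$ is its terminal vertex), and append $u_1 \to x_1 \to x_2$. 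In each of the three constructions, the rigidity of the new vertices helps: at $x_1$ (resp.\ $x_2$) there is exactly one outgoing arc and one incoming arc connecting to the rest of $G'$, which forces the shape of any extension through these vertices and makes it transparent that the forbidden arc is not reused.

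The main obstacle to guard against is that a lifted auxiliary path might silently traverse the arc we are trying to delete. The resolution in every case is a short endpoint argument: a simple oriented path in $(G - e_i, o)$ cannot contain $e_j$ as an internal arc if one of its endpoints already coincides with a vertex of $e_j$ in the ``wrong'' role relative to the fixed orientation of $e_j$. This single observation, combined with Proposition~\ref{prop:uv} and the forced structure at $x_1, x_2$, completes the proof.
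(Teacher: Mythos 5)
Your proposal is correct and follows essentially the same approach as the paper's proof: verify strongness of $(G',o')$, apply Proposition~\ref{prop:uv} edge by edge, lift witnessing paths from $G$ (subdividing through $x_1,x_2$ as needed), and build explicit paths for the three new arcs $x_1v_1$, $x_1x_2$, $u_2x_2$ via $v_2v_1$-, $v_1u_2$-, and $u_2u_1$-paths in $G$. Your explicit endpoint argument showing the auxiliary paths avoid $e_1$ and $e_2$ is a welcome extra precision that the paper leaves implicit (and indeed the paper loosely says the $pr$-path lives in $(G,o)$ rather than $(G-e,o)$, which your version correctly fixes).
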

\begin{proof}
First of all, define $(G',o')$ as in the statement of this Lemma. Then it is a strong orientation. Indeed, since $(G,o)$ is a strong orientation of $G$, any edge-cut in $G$ contains edges in both directions in $(G,o)$. Therefore, any edge-cut in $G'$ contains edges in both directions in $(G',o')$.

Now we are going to show that $$D(G',o')\supseteq (D(G,o)-\{e_1,e_2\})\cup\{x_1v_1,x_1x_2, u_2x_2\}.$$
Let $e=pr\in D(G,o)-\{e_1,e_2\}$ and let $p\rightarrow r$ be an oriented edge in $(G,o)$. By Proposition~\ref{prop:uv} we have to show that there is an oriented $pr$-path in $(G'-e,o')$. If neither of $p$ and $r$ belongs to $\{x_1, x_2\}$, that is $p$ and $r$ belong to $V(G)$, then, since $(G,o)$ is a strong orientation of $G$, there exists an oriented $pr$-path $R$ in $(G,o)$. Then $R$ with possible subdivisions by $x_1$ and $x_2$ if $e_1\in R$ or $e_2\in R$ is the required $pr$-path in $(G'-e,o')$.

If $pr=x_1v_1$, then we find a $v_2v_1$-path $R_1$ in $(G-e_1,o)$, which exists since $e_1$ is deletable in $o$. The path $x_1x_2v_2R_1$ is an oriented $x_1v_1$-path in $(G'-x_1v_1,o')$.

If $pr=u_2x_2$, then we find a $u_2u_1$-path $R_2$ in $(G-e_2,o)$, which exists since $e_2$ is deletable in $o$. The path $R_2u_1x_1x_2$ is an oriented $u_2x_2$-path in $(G'-u_2x_2,o')$.

Finally, if $pr=x_1x_2$, we find a $v_1u_2$-path $R_3$ in $(G,o)$. The path $x_1v_1R_3u_2x_2$ is an oriented $x_1x_2$-path in $(G'-x_1x_2,o')$.
\end{proof}



\begin{theorem}\label{thm:2OddCycles} Let $G$ be a cyclically $4$-edge-connected cubic graph. Let $C$ be a $2$-factor of $G$ with exactly two odd circuits, say $N_1$ and $N_2$ (and possibly some even circuits). Let $e=x_1x_2$ be an edge of $G$ such that $x_1\in V(N_1)$ and $x_2\in V(N_2)$. Let $F=G-C$ and let $M$ be a maximum matching in $C-\{x_1,x_2\}$. If there exists a smooth orientation of the circuits in $F-\{e\}\cup M$ and a smooth orientation of $C$ such that each is consistent on the edges of $N_i$ at distance 1 from $x_i$ for both $i\in\{1,2\}$, see Fig.~\ref{fig:oo}, then $fn(G)=2$.
\end{theorem}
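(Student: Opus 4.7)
My plan is to apply Theorem~\ref{thm:4flow} to a smaller graph $G'$ obtained from $G$ by suppressing $x_1$ and $x_2$, and then lift the two resulting orientations to $G$ via Lemma~\ref{lemma:oo}. Writing $N_i=x_i a^{(i)}_1 \ldots a^{(i)}_{2k_i} x_i$, the suppression replaces the path $a^{(i)}_1 x_i a^{(i)}_{2k_i}$ by a single new edge $e_i$ for $i\in\{1,2\}$. Cyclic $4$-edge-connectivity of $G$ forces $|N_i|\ge 5$ (otherwise the three $F$-edges at $V(N_i)$ already form a cyclic $3$-edge-cut), so no multiedges arise. A routine case analysis shows that any $k$-edge-cut of $G'$ with $k\le 3$ lifts to a $k$-edge-cut of $G$, so $G'$ is $3$-edge-connected and cyclically $4$-edge-connected. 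The $2$-factor $C':=(C-\{x_1,x_2\})\cup\{e_1,e_2\}$ of $G'$ consists entirely of even circuits, and the complementary matching is $F':=F-\{e\}$, making $G'$ $3$-edge-colourable.

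Applying the construction from the proof of Theorem~\ref{thm:4flow} to $G'$, I would choose the $3$-edge-colouring so that its colour classes are $A:=F'$, $C:=M$ (extended to a perfect matching of $C'$ that avoids $e_1,e_2$ on each $N_i'$), and $B$ the complementary matching, which then necessarily contains $e_1$ and $e_2$. This gives $G_1=F'\cup M=H$ and $G_2=C'$, so the smooth orientations $\sigma_1$ and $\sigma_2$ from the hypothesis serve directly as the required smooth orientations of $G_1$ and $G_2$ (with $\sigma_2$ inducing a smooth orientation of $C'$ in the obvious way). The construction yields two nowhere-zero $4$-flows $(o,f),(o',f')$ on $G'$ in which every edge has value $1$ in at least one flow, so $D(G',o)\cup D(G',o')=E(G')$ by Lemma~\ref{lemma:jedna_dva}. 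The construction of $(o',f')$ negates the $G_2$-component of the flow, so $o$ and $o'$ orient every edge of $B$---and in particular $e_1$ and $e_2$---in opposite directions.

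The crux of the argument, and the main obstacle, is to verify that $e_1$ and $e_2$ are deletable in both $(G',o)$ and $(G',o')$. In $(o',f')$ they lie in $B$ and have value $1$, so they are deletable. In $(o,f)$ they have value $2$, and I would show they are strong $2$-edges. Cyclic $4$-edge-connectivity of $G'$ forces every $3$-edge-cut containing $e_1$ to be a trivial cut at one of its endpoints $a^{(1)}_1, a^{(1)}_{2k_1}$; such a cut consists of $e_1$, the unique $F'$-edge at the endpoint (value $1$, coming from $A$), and the unique $M$-edge of $N_1$ incident with the endpoint, which is an $M$-edge of $N_1$ at distance $1$ from $x_1$. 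The consistency hypothesis guarantees that $\sigma_1$ and $\sigma_2$ agree on this $M$-edge, so its value in $(o,f)$ equals $1+2=3$. The cut therefore has values $\{1,2,3\}$ instead of the forbidden $\{1,1,2\}$, and this is exactly where the distance-$1$ consistency is used. An identical argument handles $e_2$.

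Lemma~\ref{lemma:oo} applied to $(G',o)$ and to $(G',o')$ then produces strong orientations $o_I, o_{II}$ of $G$ with $D(G,o_I)\supseteq(D(G',o)\setminus\{e_1,e_2\})\cup\{x_1 v^{(I)}_1, e, u^{(I)}_2 x_2\}$ and the analogous inclusion for $o_{II}$. Because $o$ and $o'$ orient $e_1$ and $e_2$ oppositely, the ``covered new edges'' at $x_1$ in $o_I$ and $o_{II}$ are the two different $G$-edges at $x_1$, and likewise at $x_2$, while $e$ is covered in both. Combining this with $D(G',o)\cup D(G',o')=E(G')$ yields $D(G,o_I)\cup D(G,o_{II})=E(G)$, so $fn(G)\le 2$; since $G$ is cubic, $fn(G)\ge 2$, hence $fn(G)=2$.
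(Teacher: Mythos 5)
Your approach matches the paper's: suppress $e$ to form $G' = G\sim e$, build two nowhere-zero $4$-flows on $G'$ from the decomposition into $C' = (C-\{x_1,x_2\})\cup\{e_1,e_2\}$ and $F'\cup M$, show that the replacement edges $e_1=u_1v_1$ and $e_2=u_2v_2$ are deletable in both resulting orientations (value $1$ in one flow, strong $2$-edge in the other), and lift via Lemma~\ref{lemma:oo}. Your side observations ($\lvert N_i\rvert\ge 5$, $3$-edge-colourability of $G'$) are correct, and identifying the trivial $3$-edge-cuts at the endpoints of $e_i$ and killing them via the distance-$1$ consistency hypothesis is exactly right.

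However, there is a genuine gap: the claim that $G'$ is cyclically $4$-edge-connected, justified by ``any $k$-edge-cut of $G'$ with $k\le 3$ lifts to a $k$-edge-cut of $G$.'' That lifting claim fails. Consider a $3$-edge-cut $S$ of $G'$ with $e_1,e_2\notin S$ and with $u_1,v_1$ in one part and $u_2,v_2$ in the other. Then $x_1$ must be placed with $u_1,v_1$ and $x_2$ with $u_2,v_2$, so the edge $e=x_1x_2$ necessarily crosses and the lift is a $4$-edge-cut of $G$, not a $3$-edge-cut. Consequently, if $G$ has a cyclic $4$-edge-cut through $e$ avoiding $u_ix_i, x_iv_i$ (which cyclic $4$-edge-connectivity of $G$ does not forbid), then $G'$ has a cyclic $3$-edge-cut. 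So you cannot conclude cyclic $4$-edge-connectivity of $G'$, and hence cannot dismiss non-trivial $3$-edge-cuts through $e_1$ or $e_2$ out of hand.

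The paper only proves the weaker fact that $G\sim e$ is cyclically $3$-edge-connected and then deals with a hypothetical cycle-separating $3$-edge-cut $R\ni u_1v_1$ whose other two edges have value $1$ directly. Parity forces $R$ to consist of $u_1v_1$, a second $N_1$-edge $g$, and one $F$-edge $f$; hence $R$ is disjoint from $N_2$, so $N_2$ lies entirely in one part. Placing \emph{both} $x_1$ and $x_2$ on the $N_2$-side keeps $e$ out of the lifted cut, giving a $3$-edge-cut of $G$, and since $g$ is not incident with $u_1$ or $v_1$ (otherwise it would be the $M$-edge, valued $3$, not $1$), the two $N_1$-edges of the lifted cut are independent, so it is cycle-separating in $G$, contradicting cyclic $4$-edge-connectivity of $G$. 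You would need to supply this argument (or an equivalent one) to repair your proof; the special structure of cuts \emph{through $e_1$} is what makes the lift work, not a blanket connectivity property of $G'$.
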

\begin{proof}
For $i\in\{1,2\}$ denote by $u_i$ and $v_i$ the vertices of $N_i$ which are adjacent to $x_i$ and let the rest of notation be as in the statement of the theorem. Consider the graph $G\sim e$, that is the graph created by deleting $e$ and \emph{smoothing} the two 2-valent vertices, i.e.\ removing them and adding an edge between their neighbours. First, we observe that $G\sim e$ is cyclically 3-edge-connected. This can be easily seen, because if $G\sim e$ had a cycle-separating $k$-edge-cut $S$ for some $k<3$, then $S\cup \{e\}$ would be a set of $k+1$ edges separating two components containing circuits, so the cyclic connectivity of $G$ would be at most $k+1<4$, a contradiction.

Let $(F-\{e\}\cup M,o_2)$ be a smooth orientation of the circuits in $F-\{e\}\cup M$ consistent on the edges of $N_i$ which are at distance 1 from $x_i$ for both $i\in\{1,2\}$. Let $(C,o_1)$ be such a smooth orientation of the circuits in $C$ that the edges of $N_i\cap M$ incident with $u_i$ and $v_i$ on $N_i$ are oriented equally in $(C,o_1)$ and $(F-\{e\}\cup M,o_2)$. By our assumption these orientation exists. With slight abuse of notation, we will also consider $C$ and $F-\{e\}\cup M$ to be subgraphs of $G\sim e$ and consider $(C,o_1)$ and $(F-\{e\}\cup M,o_2)$ to be orientations of these subgraphs.

We define two nowhere-zero 4-flows $(o',f')$ and $(o'',f'')$ on $G\sim e$ such that $D(G \sim e, o')\cup D(G \sim e, o'') = E(G \sim e)$ and $\{u_1v_1, u_2v_2\}\subset D(G\sim e, o')\cap D(G\sim e, o'')$. This will by Lemma~\ref{lemma:oo} imply that $fn(G)=2$.

We define flows $(o_1, f_1')$ on $C$ and $(o_2, f_2')$ on $F-\{e\}\cup M$. For edges $d\in E(C)$, let $f_1'(d) = 1$. For edges $d\in E(F-\{e\}\cup M)$, let $f_2'(d) = -2$.
The flow $(o',f')$ on $G\sim e$ will be the positive combination of $(o_1, f_1')$ and $(o_2, f_2')$.

Similarly, we define flows $(o_1, f''_1)$ on $C$ and $(o_2, f''_2)$ on $F-\{e\}\cup M$. For edges $d\in E(C)$, let $f_1''(d) = 2$. For edges $d\in E(F-e\cup M)$, let $f_2''(d) = 1$.
The flow $(o'', f'')$ on $G\sim e$ will be the positive combination of $(o_1, f''_1)$ and $(o_2, f''_2)$.

It is easy to see that both $(o', f')$ and $(o'', f'')$ are nowhere-zero. We also see that the edges of $C-M$ are valuated 1 in $(o',f')$, the edges of $F-\{e\}$ are valuated 1 in $(o'',f'')$ and the edges of $M$ are valuated 1 in $(o',f')$ if $o_1$ and $o_2$ agree and are valuated $1$ in $(o'',f'')$ if $o_1$ and $o_2$ disagree. Therefore, by Lemma~\ref{lemma:jedna_dva}, all edges of $G\sim e$ are deletable in one of these orientations and so $D(G\sim e, o')\cup D(G\sim e, o'') = E(G\sim e)$.

It remains to show that the edges $u_1v_1$ and $u_2v_2$ are deletable both in $(G\sim e,o')$ and in $(G\sim e,o'')$. By Lemma~\ref{lemma:jedna_dva}, they are deletable in $(G\sim e,o')$. Now we show that $u_1v_1$ and $u_2v_2$ are strong 2-edges in $(o'',f'')$. Suppose that, for some $i\in\{1,2\}$, the edge $u_iv_i$ is not a strong 2-edge in $(o'',f'')$, say $u_1v_1$ is not a strong 2-edge. Since we have already observed that $G\sim e$ is cyclically 3-edge-connected, this implies that $u_1v_1$ belongs to a 3-edge-cut, say $R$, and the other two edges of this 3-edge-cut have to be valuated 1. The cut $R$ must be cycle-separating. Otherwise, it would separate either $u_1$ or $v_1$ from the rest of the graph, contradicting the fact that the edges of $M$ incident with $u_1$ and $v_1$ are valuated 3 in $(o'',f'')$.

We will use the symbols $N_1$ and $N_2$ to denote also the circuits in $G\sim e$ corresponding to $N_1$ and $N_2$. Since every edge-cut intersects a circuit an even number of times, $R$ contains two edges from $N_1$
(one of them is $u_1v_1$, let the other be $g$) and an edge $f\in E(F)\setminus \{e\}$. Let $(V_1, V_2)$ be the partition of $V(G)$ corresponding to the cut $R$. All the vertices of the circuit $N_2$ belong either to $V_1$ or to $V_2$. In the former case, the partition $(V_1\cup\{x_1,x_2\},V_2)$ and in the latter case, the partition $(V_1,V_2\cup\{x_1,x_2\})$ form a partition corresponding to a 3-edge-cut in $G$. As the two edges of this cut belonging to $N_1$ are independent the cut is a cycle-separating 3-edge-cut in $G$, which is a contradiction. 
\end{proof}

\begin{figure}[!htb]
 \centering
\def\circledarrow#1#2#3{ 
	\draw[#1,<-] (#2) +(80:#3) arc(80:-260:#3);
}
\begin{tikzpicture}[label distance=-1mm, scale=0.8]
	\path[use as bounding box] (-2,-2) rectangle (2,2.2);
	\node [circle,fill,scale=0.25, label=above right:$x_1$] (x1) at (-1,0) {};
	\node [circle,fill,scale=0.25, label=above left:$x_2$] (x2) at (1,0) {};
	\node [circle,fill,scale=0.25, label=below right:$u_1$] (u1) at (-1.2,1) {};
	\node [circle,fill,scale=0.25, label=below left:$u_2$] (u2) at (1.2,1) {};
	\node [circle,fill,scale=0.25, label=above right:$v_1$] (v1) at (-1.2,-1) {};
	\node [circle,fill,scale=0.25, label=above left:$v_2$] (v2) at (1.2,-1) {};
	\node [circle,fill,scale=0.25] (z1) at (-2,2) {};
	\node [circle,fill,scale=0.25] (z2) at (2,2) {};
	\node [circle,fill,scale=0.25] (y1) at (-2,-2) {};
	\node [circle,fill,scale=0.25] (y2) at (2,-2) {};

	\draw (x1) to (x2);
 	\draw [<-, thick] (z1) to (u1);
	\draw (u1) to (x1) to (v1);
 	\draw [<-, thick] (v1) to (y1);
    \draw [->, thick] (z2) to (u2);
	\draw (u2) to (x2) to (v2);
    \draw [->, thick] (v2) to (y2);

	\path (x1) ++(-1.5,0) node {$N_1$};
	\circledarrow{thick}{-2.5,0}{1};
	\path (x2) ++(1.5,0) node {$N_2$};
	\circledarrow{thick}{2.5,0}{1};

	\path [draw, <-, thick] (u1) -- ++(0.5,0.2);
	\path [draw, ->, thick] (z1) -- ++(0.5,0.4);
	\path [draw, ->, thick] (v1) -- ++(0.5,-0.2);
	\path [draw, <-, thick] (y1) -- ++(0.5,-0.4);
	\path [draw, ->, thick] (u2) -- ++(-0.5,0.2);
	\path [draw, <-, thick] (z2) -- ++(-0.5,0.4);
	\path [draw, <-, thick] (v2) -- ++(-0.5,-0.2);
	\path [draw, ->, thick] (y2) -- ++(-0.5,-0.4);

	\draw (y1) to[bend left,looseness=2,in=70,out=110] (z1);
	\draw (y2) to[bend right,looseness=2,in=-70,out=-110] (z2);
\end{tikzpicture}
 \caption{A smooth orientation of the circuits in $F-\{x_1x_2\}\cup M$ and of those in $C$ such that each is consistent on the edges of $N_i$ at distance $1$ from $x_i$, for $i\in\{1,2\}$.} 
\label{fig:oo}
\end{figure}

\begin{lemma} \label{lemma:oeo}
Let $(G,o)$ be a strong orientation of a cubic graph $G$. Let $e_1=u_1v_1$, $e_2=u_2v_2$, and $f=w_2w_1$ be pairwise independent edges in $G$ such that $(G,o)$ contains $u_1\rightarrow v_1$, $u_2\rightarrow v_2$, and $w_2\rightarrow w_1$ and such that these edges are deletable in $(G, o)$. Let a cubic graph $G'$ be created from $G$ by performing the following steps:
\begin{itemize}
    \item subdivide the edges $e_1$ and $e_2$ with the vertices $x_1$ and $x_2$, respectively, 
    \item subdivide the edge $w_1w_2$ with the vertices $y_1$ and $y_2$ (in this order), and
    \item add the edges $x_1y_1$ and $x_2y_2$.
\end{itemize}
Let $(G',o')$ be the orientation of $G'$ containing $u_1\rightarrow x_1$, $x_1\rightarrow v_1$, $y_1\rightarrow w_1$, $y_2\rightarrow y_1$, $w_2\rightarrow y_2$,  $u_2\rightarrow x_2$, $x_2\rightarrow v_2$ and such that $o'(e)=o(e)$ for all the remaining edges of $G'$ except for $x_1y_1$ and $x_2y_2$. Then
\begin{enumerate}[(a)]
\item if $(G',o')$ contains $y_1\rightarrow x_1$ and $x_2\rightarrow y_2$, $(G',o')$ is a strong orientation of $G'$ and 
        $D(G',o')\supseteq (D(G,o)-\{e_1,e_2,f\})\cup \{u_1x_1,x_1y_1,y_1w_1,y_2w_2, x_2y_2,x_2v_2\}$ (Fig.~{\rm \ref{fig:oeo}(left)});
\item if $(G',o')$ contains $x_1\rightarrow y_1$ and $y_2\rightarrow x_2$, $(G',o')$ is a strong orientation of $G'$ and 
        $D(G',o')\supseteq (D(G,o)-\{e_1,e_2,f\})\cup \{x_1v_1,y_1y_2,u_2x_2\}$ (Fig.~{\rm \ref{fig:oeo}(right)}).
\end{enumerate}
\end{lemma}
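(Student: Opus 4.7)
The plan is to extend the argument of Lemma~\ref{lemma:oo} from a two-edge to a three-edge gadget, organized in three steps: (i) I would first verify that $(G',o')$ is a strong orientation in both cases; (ii) next show that every edge of $D(G,o) - \{e_1,e_2,f\}$ remains deletable in $(G',o')$; (iii) finally verify, via Proposition~\ref{prop:uv}, the deletability of each edge in the claimed augmented set. Steps (i) and (ii) should be almost automatic: any oriented $pq$-walk in $(G,o)$ lifts to $(G',o')$ by replacing each traversal of $e_i$ by the directed subpath $u_i \to x_i \to v_i$ and each traversal of $f$ by $w_2 \to y_2 \to y_1 \to w_1$. In both cases (a) and (b), each of $x_1,x_2,y_1,y_2$ has at least one incoming and one outgoing edge in $(G',o')$, hence is reachable from and can reach every vertex of $V(G)$. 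For (ii), an unchanged edge $e = pq \in D(G,o) - \{e_1,e_2,f\}$ inherits a lifted witness path from $(G-e,o)$ to $(G'-e,o')$.

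The bulk of the work is (iii). For each new edge $pq$ I would produce the required oriented $pq$-path in $(G'-pq,o')$ by concatenating a short traversal inside the gadget with a judicious auxiliary path coming from the deletability of some $e_i \in \{e_1,e_2,f\}$ in $(G,o)$. In case (a), I would handle the six edges as follows: for $u_1x_1$ use a $u_1 w_2$-path in $(G-f,o)$ followed by $w_2 \to y_2 \to y_1 \to x_1$; for $x_1y_1$ (oriented $y_1 \to x_1$) prepend $y_1 \to w_1$ to a $w_1 u_1$-path in $(G-f,o)$ and append $u_1 \to x_1$; for $y_1w_1$ use $y_1 \to x_1 \to v_1$ followed by a $v_1 w_1$-path in $(G-f,o)$; for $y_2w_2$ (oriented $w_2 \to y_2$) use a $w_2 u_2$-path in $(G-f,o)$ followed by $u_2 \to x_2 \to y_2$; for $x_2y_2$ use $x_2 \to v_2$, a $v_2 w_2$-path in $(G-e_2,o)$, and then $w_2 \to y_2$; for $x_2v_2$ use $x_2 \to y_2 \to y_1 \to w_1$ followed by a $w_1 v_2$-path in $(G-e_2,o)$. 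Case (b) would be handled analogously: for $x_1v_1$ use $x_1 \to y_1 \to w_1$ followed by a $w_1 v_1$-path in $(G-e_1,o)$; for $y_1y_2$ (oriented $y_2 \to y_1$) use $y_2 \to x_2 \to v_2$, a $v_2 u_1$-path in $(G-f,o)$, then $u_1 \to x_1 \to y_1$; and for $u_2x_2$ use a $u_2 w_2$-path in $(G-e_2,o)$ followed by $w_2 \to y_2 \to x_2$.

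The main obstacle I expect is bookkeeping: in each subcase I would have to check that the chosen auxiliary path in $(G-e_i,o)$, once lifted to $(G',o')$ and concatenated with the gadget fragment, actually lives in $(G'-pq,o')$ for the specific $pq$ being tested. The recurring fact that makes this automatic is that a simple oriented path cannot use an edge incident to its starting vertex in the outgoing direction, nor an edge incident to its terminal vertex in the incoming direction, since either would force a repeated visit to an endpoint. This will control precisely which of $e_1, e_2, f$ the auxiliary path may traverse and will ensure that its lift avoids the edge being deleted; Proposition~\ref{prop:uv} then yields the desired deletability in every subcase.
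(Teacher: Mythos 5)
Your overall plan mirrors the paper's: lift paths through the gadget, and for each new edge $pq$ concatenate a short gadget segment with an auxiliary path furnished by the deletability of one of $e_1,e_2,f$, then invoke Proposition~\ref{prop:uv}. Most of your subcase choices coincide with the paper's and are correct. However, there is a genuine gap in your treatment of the edge $u_1x_1$ in case (a), and the ``recurring fact'' you rely on to control the auxiliary paths is stated backwards.

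The correct bookkeeping fact is that a simple oriented $p$-to-$q$ path cannot use an arc directed \emph{into} $p$ nor an arc directed \emph{out of} $q$ (either would force a repeated visit to an endpoint). You have the directions reversed, and the first arc of the path plainly \emph{is} an outgoing arc of $p$. This matters concretely for $u_1x_1$: you propose a $u_1w_2$-path $R$ in $(G-f,o)$ followed by $w_2\to y_2\to y_1\to x_1$. The removal of $f$ gives nothing new here (a $u_1w_2$-path already cannot use $f$, since $f=w_2\to w_1$ leaves the terminal vertex $w_2$), but nothing prevents $R$ from beginning with the arc $e_1=u_1\to v_1$, which leaves the starting vertex and is perfectly permissible in a simple oriented path. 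Its lift then traverses $u_1\to x_1\to v_1$, so the concatenated path both reuses the vertex $x_1$ and, worse, passes through the very edge $u_1x_1$ that has been deleted. The fix is to take the $u_1w_2$-path in $(G-e_1,o)$ instead (which exists because $e_1$ is deletable); then the lift avoids $u_1x_1$ and $x_1v_1$, and $f$ is excluded automatically as above. This is exactly the choice made in the paper. Your remaining five subcases of (a) and the three subcases of (b) are fine: in each, the forbidden edges among $e_1,e_2,f$ are either explicitly deleted from $G$ or are automatically excluded because they enter the path's starting vertex or leave its terminal vertex.
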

\begin{figure}
    \centering
    \begin{minipage}{0.49\linewidth}
        \centering
        \begin{tikzpicture}[label distance=-0.4mm, scale=0.7]
    \node [circle,fill,scale=0.25, label=right:$x_1$] (x1) at (-2,0) {};
	\node [circle,fill,scale=0.25, label=left:$x_2$] (x2) at (2,0) {};
	\node [circle,fill,scale=0.25, label=left:$u_1$] (u1) at (-2.2,1) {};
	\node [circle,fill,scale=0.25, label=right:$u_2$] (u2) at (2.2,1) {};
	\node [circle,fill,scale=0.25, label=left:$v_1$] (v1) at (-2.2,-1) {};
	\node [circle,fill,scale=0.25, label=right:$v_2$] (v2) at (2.2,-1) {};
	\node (ez1) at (-3,2) {};
	\node (ez2) at (3,2) {};
	\node (ey1) at (-3,-2) {};
	\node (ey2) at (3,-2) {};

	\node [circle,fill,scale=0.25, label=below:$y_1$] (y1) at (-0.5,-2) {};
	\node [circle,fill,scale=0.25, label=below:$y_2$] (y2) at (0.5,-2) {};
	\node [circle,fill,scale=0.25, label=below:$w_1$] (w1) at (-1.5,-2.2) {};
	\node [circle,fill,scale=0.25, label=below:$w_2$] (w2) at (1.5,-2.2) {};
	\node (ew1) at (-2.5,-2.8) {};
	\node (ew2) at (2.5,-2.8) {};

	\draw [<-, very thick, blue] (x1) to (y1);
	\draw [<-, very thick, blue] (y2) to (x2);
	\draw (ez1) to (u1);
	\draw [->, very thick, color=blue] (u1) to (x1);
	\draw [->, thick] (x1) to (v1);
	\draw (v1) to (ey1);
	\draw (ez2) to (u2);
	\draw [->, thick] (u2) to (x2);
	\draw [->, very thick, blue] (x2) to (v2);
	\draw (v2) to (ey2);

	\draw (ew1) to (w1);
	\draw [<-, very thick, blue] (w1) to (y1);
	\draw [<-, thick] (y1) to (y2);
	\draw [<-, very thick, blue] (y2) to (w2);
	\draw (w2) to (ew2);

	\path (x1) ++(-1.5,0) node {$N_1$};
	\path (x2) ++(1.5,0) node {$N_2$};
	\path (0,-1.5) ++(0,-1.5) node {$W$};


\end{tikzpicture}
    \end{minipage}
    \begin{minipage}{0.49\linewidth}
        \centering
        \begin{tikzpicture}[label distance=-0.4mm, scale=0.7]
	\node [circle,fill,scale=0.25, label=right:$x_1$] (x1) at (-2,0) {};
	\node [circle,fill,scale=0.25, label=left:$x_2$] (x2) at (2,0) {};
	\node [circle,fill,scale=0.25, label=left:$u_1$] (u1) at (-2.2,1) {};
	\node [circle,fill,scale=0.25, label=right:$u_2$] (u2) at (2.2,1) {};
	\node [circle,fill,scale=0.25, label=left:$v_1$] (v1) at (-2.2,-1) {};
	\node [circle,fill,scale=0.25, label=right:$v_2$] (v2) at (2.2,-1) {};
	\node (ez1) at (-3,2) {};
	\node (ez2) at (3,2) {};
	\node (ey1) at (-3,-2) {};
	\node (ey2) at (3,-2) {};

	\node [circle,fill,scale=0.25, label=below:$y_1$] (y1) at (-0.5,-2) {};
	\node [circle,fill,scale=0.25, label=below:$y_2$] (y2) at (0.5,-2) {};
	\node [circle,fill,scale=0.25, label=below:$w_1$] (w1) at (-1.5,-2.2) {};
	\node [circle,fill,scale=0.25, label=below:$w_2$] (w2) at (1.5,-2.2) {};
	\node (ew1) at (-2.5,-2.8) {};
	\node (ew2) at (2.5,-2.8) {};

	\draw [->, thick] (x1) to (y1);
	\draw [->, thick] (y2) to (x2);
	\draw (ez1) to (u1);
	\draw [->, thick] (u1) to (x1);
	\draw [->, very thick, blue] (x1) to (v1);
	\draw (v1) to (ey1);
	\draw (ez2) to (u2);
	\draw [->, very thick, blue] (u2) to (x2);
	\draw [->, thick] (x2) to (v2);
	\draw (v2) to (ey2);

	\draw (ew1) to (w1);
	\draw [<-, thick] (w1) to (y1);
	\draw [<-, very thick, blue] (y1) to (y2);
	\draw [<-, thick] (y2) to (w2);
	\draw (w2) to (ew2);

	\path (x1) ++(-1.5,0) node {$N_1$};
	\path (x2) ++(1.5,0) node {$N_2$};
	\path (0,-1.5) ++(0,-1.5) node {$W$};


\end{tikzpicture}
    \end{minipage}
    \caption{A part of $G'$ and orientation $(G',o')$ as defined in Lemma~\ref{lemma:oeo}. The left-hand-side corresponds with the orientation of (a) and the right-hand-side corresponds with the orientation of (b). If the conditions of Lemma~\ref{lemma:oeo} are met the thick, blue edges will be deletable.}
    \label{fig:oeo}
\end{figure}
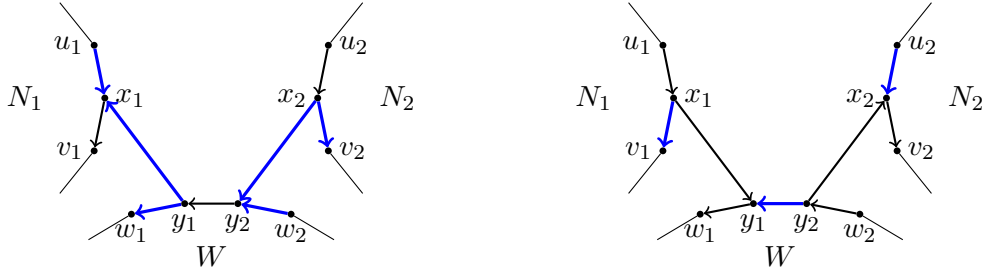
\begin{proof}
First of all, if $(G',o')$ is defined either as in (a) or as in (b), then it is a strong orientation. Indeed, since $(G,o)$ is a strong orientation of $G$, any edge-cut in $G$ contains edges in both directions in $(G,o)$. Therefore, any edge-cut in $G'$ contains edges in both directions in $(G',o')$.

To prove (a) we need to show that every edge from 
\[(D(G,o)-\{e_1,e_2,f\})\cup \{u_1x_1,x_1y_1,y_1w_1,y_2w_2, x_2y_2,x_2v_2\}\] is deletable in $(G',o')$. To do so, it is enough to show that for any edge $e=pr$ in this set oriented as $p\rightarrow r$, there exists an oriented $pr$-path in $(G'-e,o')$. If $e\in D(G,o)-\{e_1,e_2,f\}$, then we can use the fact that $e$ is a deletable edge in $(G,o)$ and therefore there is an oriented $pr$-path in $(G-e,o)$. The corresponding $pr$-path in $(G',o')$ possibly subdivided by some of the vertices $x_1,x_2,y_1,y_2$ is a required path.

If $pr=u_1x_1$, we take a $u_1w_2$-path $R_1$ in $G-e_1$ (it exists since $e_1$ is deletable in $(G, o)$). Possibly subdivide $R_1$ to $R_1'$ in $(G', o')$. The path $R_1'w_2y_2y_1x_1$ is an oriented $u_1x_1$-path in $(G'-u_1x_1,o')$, so $u_1x_1$ is deletable in $(G',o')$.

We proceed in this way. For every of the five remaining edges we find an oriented path $R$ in $(G,o)$ in which possibly one of the edges $e_1,e_2,f$ is forbidden (such a path exists as each of these three edges is deletable in $(G,o)$), possibly subdivide $R$ to $R'$ in $(G',o')$ and finally find the required oriented path $T$ in $(G'-e,o')$. It is summarised in the following table.

\begin{center}
\begin{tabular}{| c | c | c |}\hline
$e$ & $R$ & $T$\\\hline\hline
$u_1x_1$ & $u_1w_2$-path in $G - e_1$ & $R'w_2y_2y_1x_1$\\\hline
$y_1x_1$ & $w_1u_1$-path in $G$ & $y_1w_1R'u_1x_1$\\\hline
$y_1w_1$ & $v_1w_1$-path in $G-f$ & $y_1x_1v_1R'$\\\hline 
$w_2y_2$ & $w_2u_2$-path in $G-f$ & $R'u_2x_2y_2$\\\hline 
$x_2y_2$ & $v_2w_2$-path in $G$ & $x_2v_2R'w_2y_2$\\\hline
$x_2v_2$ & $w_1v_2$-path in $G-e_2$ & $x_2y_2y_1w_1R'$\\\hline 
\end{tabular}
\end{center}

To prove (b) we proceed analogously as in (a). There are only three edges we have to pay special attention to. Each of the edges corresponds to a line of the following table. This completes the proof.

\begin{center}
\begin{tabular}{| c | c | c |}\hline
$e$ & $R$ & $T$\\\hline\hline
$x_1v_1$ & $w_1v_1$-path in $G-e_1$ & $x_1y_1w_1R'$\\\hline
$y_2y_1$ & $v_2u_1$-path in $G-f$ & $y_2x_2v_2R'u_1x_1y_1$\\\hline
$u_2x_2$ & $u_2w_2$-path in $G-e_2$ & $R'w_2y_2x_2$\\\hline
\end{tabular}
\end{center}
\end{proof}


\begin{theorem}\label{thm:2Odd1Even}
Let $G$ be a cyclically $4$-edge-connected cubic graph with a $2$-factor $C$ containing precisely two odd circuits $N_1$ and $N_2$ and at least one even circuit $W$. 
Let $x_1y_1$, $y_1y_2$ and $y_2x_2$ be edges of $G$ such that $x_1\in V(N_1)$, $x_2\in V(N_2)$ and $y_1, y_2\in V(W)$. For $i\in\{1,2\}$ denote by $u_i$ and $v_i$ the vertices of $N_i$ which are adjacent to $x_i$ and by $w_i$ the the vertex in $W -\{y_1,y_2\}$ which is adjacent to $y_i$ in $W$. Let $F = G - C$ and let $M$ be a maximum matching in $C - \{x_1, y_1, y_2, x_2\}$. If there exists a smooth orientation of the circuits in $F - \{x_1y_1,x_2y_2\} \cup M$ and a smooth orientation of $C$ such that each is consistent on the edges of $N_i$ at distance 1 from $x_i$ for both $i\in\{1,2\}$ and on the edges of $W$ at distance 1 from $y_i$ but not incident with $y_{3-i}$, for $i\in\{1,2\}$, see Fig.~\ref{fig:2foeo},
and $G\sim x_1y_1\sim x_2y_2$ is cyclically $3$-edge-connected and has no cycle-separating $3$-edge-cut $\{e_1, e_2, e_3\}$ with $e_1\in \{u_1v_1, u_2v_2, w_1w_2\}$ and $e_2, e_3\in E(F - \{x_1y_1,x_2y_2\}\cup M)$, then $fn(G)= 2$.
\end{theorem}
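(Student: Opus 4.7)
The plan is to mirror the proof of Theorem~\ref{thm:2OddCycles}, but to build the two flows on $H = G \sim x_1y_1 \sim x_2y_2$ and then lift them to $G$ using Lemma~\ref{lemma:oeo}. Smoothing $x_i$ out of the odd circuit $N_i$ yields an even circuit $N_i' = N_i - x_i + u_iv_i$, and smoothing the adjacent vertices $y_1, y_2$ out of the even circuit $W$ yields an even circuit $W' = W - \{y_1,y_2\} + w_1w_2$. Hence $C' := (C - \{x_1,y_1,y_2,x_2\}) + \{u_1v_1,u_2v_2,w_1w_2\}$ is an even $2$-factor of $H$. Moreover, because each of $N_1 - x_1$, $N_2 - x_2$, $W - \{y_1,y_2\}$ and the remaining circuits of $C$ has an even number of vertices, $M$ must be a perfect matching of $C - \{x_1,y_1,y_2,x_2\}$. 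Together with the perfect matching $F' := F - \{x_1y_1, x_2y_2\}$ of $H$, this means that $F' \cup M$ is a disjoint union of circuits in $H$, and the smooth orientations provided by the hypothesis descend to smooth orientations $o_1$ of $C'$ and $o_2$ of $F' \cup M$ in $H$ that agree on the edges of $M$ incident with $u_i, v_i, w_i$.

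Following the template of Theorem~\ref{thm:2OddCycles}, I would then define $(o',f')$ to be the positive combination of $o_1$ with value $1$ on $C'$ and $o_2$ with value $-2$ on $F' \cup M$, and $(o'',f'')$ the positive combination with values $2$ on $C'$ and $1$ on $F' \cup M$. A routine computation of the combined values shows that in $(o',f')$ every edge of $C' - M$ (in particular the three special edges $u_1v_1, u_2v_2, w_1w_2$) and every $M$-edge on which $o_1, o_2$ agree takes value $1$, while in $(o'',f'')$ every edge of $F'$ and every $M$-edge on which $o_1, o_2$ disagree takes value $1$. By Lemma~\ref{lemma:jedna_dva} every edge of $H$ is deletable in at least one of the two orientations, and the three special edges are already deletable in $(H, o')$. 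To deal with $(H, o'')$, where $u_1v_1, u_2v_2, w_1w_2$ carry value $2$, I would show they are strong $2$-edges. In the cubic $3$-edge-connected graph $H$, every $3$-edge-cut is either trivial or cycle-separating. A trivial $3$-edge-cut at a vertex in $\{u_1,v_1,u_2,v_2,w_1,w_2\}$ contains the corresponding distance-$1$ edge of $M$; by the consistency hypothesis $o_1$ and $o_2$ agree on that edge, so it takes value $3$, preventing the forbidden value-$1$ configuration. A cycle-separating $3$-edge-cut consisting of a special edge and two value-$1$ edges would have both of those edges in $F'\cup M$, which is precisely the configuration forbidden by the second part of the hypothesis. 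Hence $u_1v_1, u_2v_2, w_1w_2$ are strong $2$-edges in $(o'',f'')$, and so deletable there too.

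Finally, I would apply Lemma~\ref{lemma:oeo}(a) to $(H, o')$ and Lemma~\ref{lemma:oeo}(b) to $(H, o'')$, possibly relabelling $u_i \leftrightarrow v_i$ or $w_1 \leftrightarrow w_2$ to match the orientation convention of the lemma, obtaining two strong orientations $\tilde o'$ and $\tilde o''$ of $G$. The lemma guarantees that every edge of $G$ lying in $E(H) \setminus \{u_1v_1, u_2v_2, w_1w_2\}$ which was deletable in the corresponding $H$-orientation remains deletable in the lift, and the six edges produced by case (a), namely $\{u_1x_1, x_1y_1, y_1w_1, y_2w_2, x_2y_2, x_2v_2\}$, together with the three produced by case (b), namely $\{x_1v_1, y_1y_2, u_2x_2\}$, cover exactly the nine edges of $G$ incident to $\{x_1, y_1, y_2, x_2\}$. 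Every edge of $G$ is therefore deletable in at least one of $\tilde o'$, $\tilde o''$, giving $fn(G) \leq 2$, and since $G$ is cubic we conclude $fn(G) = 2$. The main subtlety is the strong $2$-edge verification for $u_1v_1, u_2v_2, w_1w_2$ in $(o'',f'')$: this is exactly the point where both the consistency of the smooth orientations and the hypothesis forbidding certain cycle-separating $3$-edge-cuts are indispensable.
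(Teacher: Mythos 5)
Your proposal is correct and follows essentially the same route as the paper: pass to $H = G\sim x_1y_1 \sim x_2y_2$, build the two positive-combination $4$-flows on $H$ exactly as in Theorem~\ref{thm:2OddCycles}, use the consistency hypothesis (for trivial $3$-edge-cuts) together with the forbidden cycle-separating $3$-edge-cut hypothesis (for nontrivial cuts) to show $u_1v_1$, $u_2v_2$, $w_1w_2$ are strong $2$-edges in $(o'',f'')$, and then lift via cases (a) and (b) of Lemma~\ref{lemma:oeo}. The paper's version is much more compressed — it simply defers to "the arguments from Theorem~\ref{thm:2OddCycles}" and cites Lemmas~\ref{lemma:jedna_dva} and~\ref{lemma:oeo} — whereas you spell out the details, including the observation that $M$ is perfect, that all nine edges around $\{x_1,y_1,y_2,x_2\}$ are covered by the union of the (a)- and (b)-sets, and the explicit treatment of trivial cuts at $w_1,w_2$, which the paper leaves implicit.
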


\begin{proof}
Let $f_1=x_1y_1$, $f_2=x_2y_2$ and $G' := G\sim f_1\sim f_2$.
We define nowhere-zero 4-flows $(o',f')$ and $(o'',f'')$ on $G'$ similarly as in the proof of Theorem~\ref{thm:2OddCycles}. We can use the arguments from the latter theorem. Since we assume that none of the edges $u_1v_1, u_2v_2, w_1w_2$ are in a cycle-separating 3-edge-cut with two edges in $F - \{f_1,f_2\}\cup M$ they are strong $2$-edges in $(o'',f'')$
and the result follows from Lemma~\ref{lemma:jedna_dva} and Lemma~\ref{lemma:oeo}.
\end{proof}

\begin{figure}[!htb]
 \centering
\def\circledarrow#1#2#3{ 
	\draw[#1,<-] (#2) +(80:#3) arc(80:-260:#3);
}
\begin{tikzpicture}[label distance=-0.4mm, scale=0.8]
	\path[use as bounding box] (-5,-5.3) rectangle (5,2.4);
	\node [circle,fill,scale=0.25, label=right:$x_1$] (x1) at (-2,0) {};
	\node [circle,fill,scale=0.25, label=left:$x_2$] (x2) at (2,0) {};
	\node [circle,fill,scale=0.25, label=left:$u_1$] (u1) at (-2.2,1) {};
	\node [circle,fill,scale=0.25, label=right:$u_2$] (u2) at (2.2,1) {};
	\node [circle,fill,scale=0.25, label=left:$v_1$] (v1) at (-2.2,-1) {};
	\node [circle,fill,scale=0.25, label=right:$v_2$] (v2) at (2.2,-1) {};
	\node [circle,fill,scale=0.25] (ez1) at (-3,2) {};
	\node [circle,fill,scale=0.25] (ez2) at (3,2) {};
	\node [circle,fill,scale=0.25] (ey1) at (-3,-2) {};
	\node [circle,fill,scale=0.25] (ey2) at (3,-2) {};

	\node [circle,fill,scale=0.25, label=below:$y_1$] (y1) at (-0.5,-3) {};
	\node [circle,fill,scale=0.25, label=below:$y_2$] (y2) at (0.5,-3) {};
	\node [circle,fill,scale=0.25, label=below:$w_1$] (w1) at (-1.5,-3.2) {};
	\node [circle,fill,scale=0.25, label=below:$w_2$] (w2) at (1.5,-3.2) {};
	\node [circle,fill,scale=0.25] (ew1) at (-2.5,-4) {};
	\node [circle,fill,scale=0.25] (ew2) at (2.5,-4) {};

	\draw (x1) to (y1) to (y2) to (x2);
    \draw [<-, thick] (ez1) to (u1);
	\draw (u1) to (x1) to (v1);
 	\draw [<-, thick] (v1) to (ey1);
    \draw [->, thick] (ez2) to (u2);
	\draw (u2) to (x2) to (v2);
 	\draw [->, thick] (v2) to (ey2);

 	\draw [<-, thick] (ew1) to (w1);
	\draw (w1) to (y1);
	\draw (y1) to (y2);
	\draw (y2) to (w2);
 	\draw [<-, thick] (w2) to (ew2);

	\path (x1) ++(-1.5,0) node {$N_1$};
	\circledarrow{thick}{-3.5,0}{1};
	\path (x2) ++(1.5,0) node {$N_2$};
	\circledarrow{thick}{3.5,0}{1}
	\path (0,-3) ++(0,-1.5) node {$W$};
	\circledarrow{thick}{0,-4.5}{1};

	\path [draw, <-, thick] (u1) -- ++(0.5,0.2);
	\path [draw, ->, thick] (ez1) -- ++(0.5,0.4);
	\path [draw, ->, thick] (v1) -- ++(0.5,-0.2);
	\path [draw, <-, thick] (ey1) -- ++(0.5,-0.4);
	\path [draw, ->, thick] (u2) -- ++(-0.5,0.2);
	\path [draw, <-, thick] (ez2) -- ++(-0.5,0.4);
	\path [draw, <-, thick] (v2) -- ++(-0.5,-0.2);
	\path [draw, ->, thick] (ey2) -- ++(-0.5,-0.4);
	\path [draw, <-, thick] (w1) -- ++(-0.2,0.5);
	\path [draw, ->, thick] (w2) -- ++(0.2,0.5);
	\path [draw, ->, thick] (ew1) -- ++(-0.4,0.5);
	\path [draw, <-, thick] (ew2) -- ++(0.4,0.5);

	\draw (ey1) to[bend left,looseness=2,in=70,out=110] (ez1);
	\draw (ey2) to[bend right,looseness=2,in=-70,out=-110] (ez2);
	\draw (ew1) to[bend right,looseness=1.2,in=-70,out=-110] (ew2);
\end{tikzpicture}
 \caption{A smooth orientation of the circuits in $F-\{x_1y_1, y_2x_2\}\cup M$ and of the circuits in $C$ such that each consistent on the edges of $N_i$ at distance $1$ from $x_i$, for $i\in\{1,2\}$, and on the edges of $W$ at distance $1$ from $y_i$ but not incident with $y_{3-i}$, for $i\in\{1,2\}$.}
\label{fig:2foeo}
\end{figure}

\section{Algorithm}\label{sec:algorithm}
We propose two algorithms for computationally verifying whether or not a given $3$-edge-connected cubic graph has Frank number $2$, i.e.\ a heuristic and an exact algorithm. Note that the Frank number for $3$-edge-connected cubic graphs is always at least $2$. 
Our algorithms are intended for graphs which are not $3$-edge-colourable, since $3$-edge-connected $3$-edge-colourable graphs have Frank number $2$ (cf.\ Theorem~\ref{thm:4flow}).

The first algorithm is a heuristic algorithm, which makes use of Theorem~\ref{thm:2OddCycles} and Theorem~\ref{thm:2Odd1Even}. Hence, it can only be used for cyclically $4$-edge-connected cubic graphs. For every $2$-factor in the input graph $G$, we verify if one of the configurations of these theorems is present. If that is the case, the graph has Frank number $2$. 

More specifically, we look at every $2$-factor of $G$ by generating every perfect matching and looking at its complement. We then count how many odd circuits there are in the $2$-factor under investigation. If there are precisely two odd circuits, then we check for every edge connecting the two odd circuits whether or not the conditions of Theorem~\ref{thm:2OddCycles} hold. If they hold for one of these edges, we stop the algorithm and return that the graph has Frank number $2$. If these conditions do not hold for any of these edges or if there are none, we check for all triples of edges $x_1y_1, y_1y_2, y_2x_2$, where $x_1$ and $x_2$ lie on different odd circuits and $y_1$ and $y_2$ lie on the same even circuit of our $2$-factor, whether the conditions of Theorem~\ref{thm:2Odd1Even} hold. If they do, then $G$ has Frank number $2$ and we stop the algorithm. The pseudocode of this algorithm can be found in Algorithm~\ref{alg:heuristic}.

Note that in practice, when checking the conditions of Theorem~\ref{thm:2OddCycles} and Theorem~\ref{thm:2Odd1Even}, we only consider one maximal matching as defined in the statements of the theorems. 
This has no effect on the correctness of the heuristic algorithm, but if we choose a matching for which the conditions of Theorem~\ref{thm:2OddCycles} and Theorem~\ref{thm:2Odd1Even} do not hold, the heuristic will not be sufficient to decide whether or not the Frank number is $2$. The second algorithm is then needed to decide this. As we will see in the results section, this approach is sufficient to generate all graphs in the relevant class up to all orders for which it is feasible to generate them.

The second algorithm is an exact algorithm for determining whether or not a $3$-edge-connected cubic graph has Frank number $2$. The pseudocode of this algorithm can be found in Algorithm~\ref{alg:fn=2}. For a graph $G$, we start by considering each of its strong orientations $(G,o)$ and try to find a complementary orientation $(G,o')$ such that every edge is deletable in either $(G,o)$ or $(G,o')$. First, we check if there is a vertex in $G$ for which none of its adjacent edges are deletable in $(G,o)$. If this is the case, there exists no complementary orientation as no orientation of a cubic graph can have three deletable edges incident to the same vertex.
If $(G,o)$ does not contain such a vertex, we look for a complementary orientation using some tricks to reduce the search space. 

More precisely, we first we start with an empty \textit{partial orientation}, i.e.\ a directed spanning subgraph of some orientation of $G$, and fix the orientation of some edge. Note that we do not need to consider the opposite orientation of this edge, since an orientation of a graph in which all arcs are reversed has the same set of deletable edges as the original orientation.

We then recursively orient edges of $G$ that have not yet been oriented. After orienting an edge, the rules of Lemma~\ref{lemma:rules}
may enforce the orientation of edges which are not yet oriented. We orient them in this way before proceeding with the next edge. This heavily restricts the number edges which need to be added. As soon as a complementary orientation is found, we can stop the algorithm and return that the graph $G$ has Frank number $2$. If for all strong orientations of $G$ no such complementary orientation is found, then the Frank number of $G$ is higher than~$2$.  

Since the heuristic algorithm is much faster than the exact algorithm, we will first apply the heuristic algorithm. After this we will apply the exact algorithm for those graphs for which the heuristic algorithm was unable to decide whether or not the Frank number is $2$. In Section~\ref{sec:results}, we give more details on how many graphs pass this heuristic algorithm.

An implementation of these algorithms can be found on GitHub~\cite{GMR23Program}. Our implementation uses bitvectors to store adjacency lists and lists of edges and uses bitoperations to efficiently manipulate these lists.

\makeatletter
\let\OldStatex\Statex
\renewcommand{\Statex}[1][3]{%
  \setlength\@tempdima{\algorithmicindent}%
  \OldStatex\hskip\dimexpr#1\@tempdima\relax}
\makeatother

\begin{algorithm}[!htb]
\caption{$\operatorname{heuristicForFrankNumber2}(\text{Graph }G)$}\label{alg:heuristic}
\begin{algorithmic}[1]
\For{each perfect matching $F$}
	\State Store odd circuits of $C:= G - F$ in $\mathcal{O} = \{N_1, \ldots, N_k\}$
	\If{$\lvert \mathcal{O} \rvert$ is not $2$}
		\State Continue with the next perfect matching
	\EndIf
	\ForAll{edges $x_1x_2$ with $x_1\in V(N_1), x_2\in V(N_2)$}
            \State // Test if Theorem~\ref{thm:2OddCycles} can be applied
			\State Store a maximal matching of $C - \{x_1,x_2\}$ in $M$ 
			\State Denote the neighbours of $x_1$ and $x_2$ in $C$ by $u_1, v_1$ and $u_2, v_2$, respectively
            \State Denote the set of edges of $N_1$ and $N_2$ at distance $1$ from $x_1$ and $x_2$ by $Z$
			\State Create an empty partial orientation $(F - \{x_1,x_2\}\cup M,o)$
			\ForAll{$x\in \{u_1, v_1, u_2, v_2\}$}
				\If{the circuit in $F - \{x_1,x_2\}\cup M$ containing $x$ is not yet oriented}
					\State Orient the circuit in $F - \{x_1,x_2\}\cup M$ containing $x$ such that it is smooth
                    \Statex[4] and maintaining consistency on $Z$ with some orientation of $C$ if possible.
				\EndIf
			\EndFor
			\If{$(F - \{x_1,x_2\}\cup M,o)$ can be extended to a smooth orientation consistent on 
            \Statex[2] $Z$ with some smooth orientation of $C$}
				\State \Return True \label{line:first} // Theorem~\ref{thm:2OddCycles} applies
			\EndIf
    \EndFor
    \ForAll{pairs of edges $x_1y_1, x_2y_2$ with $x_1\in V(N_1), x_2 \in V(N_2)$ and $y_1, y_2$ adjacent
    \Statex[1] and on the same even circuit $W$ of $C$}
                    \State // Test if Theorem~\ref{thm:2Odd1Even} can be applied
					\State Store a maximal matching of $C - \{x_1, y_1, y_2, x_2\}$ in $M$
					\State Denote the neighbours of $x_1$ and $x_2$ in $C$ by $u_1, v_1$ and $u_2, v_2$, respectively
					\State Denote the neighbour of $y_1$ in $C - y_2$ by $w_1$
                    and of $y_2$ in $C - y_1$ by $w_2$
                    \State Denote the set of edges of $N_i$ at distance $1$ from $x_i$ and of $W$ at distance $1$ from
                    \Statex[2] $y_i$ but not incident with $y_{3-i}$ by $Z$
					\State Create an empty partial orientation $(F - \{x_1, y_1, y_2, x_2\}\cup M,o)$
					\ForAll{$x\in \{u_1, v_1, u_2, v_2, w_1, w_2\}$}
						\If{the circuit in $F - \{x_1, y_1, y_2, x_2\}\cup M$ with $x$ is not oriented in $(G,o)$}
							\State Orient the circuit in $F - \{x_1, y_1, y_2, x_2\}\cup M$ containing $x$ such that it is \Statex[4] smooth and maintaining consistency on $Z$ with some orientation of $C$
						\EndIf
					\EndFor
					\If{$(F - \{x_1, y_1, y_2, x_2\}\cup M,o)$ can be extended to a smooth orientation consistent 
                    \Statex[2] on $Z$ with some orientation of $C$}
                        \If{$G\sim x_1y_1\sim x_2y_2$ is not cyclically $3$-edge-connected}
                            \State Continue with for loop
                        \EndIf
                        \State // Check cycle-separating edge-set condition
						\ForAll {pairs of edges $e_1, e_2$ in $F - \{x_1, y_1, y_2, x_2\}\cup M$}
							\ForAll {$e\in \{u_1x_1, w_1y_1, u_2x_2\}$}
								\If{$\{e, e_1, e_2\}$ cyclically separates $G - x_1y_1 - x_2y_2$}
									\State \Return True\label{line:second} // Theorem~\ref{thm:2Odd1Even} applies
								\EndIf
							\EndFor
						\EndFor
					\EndIf
	\EndFor
\EndFor
\State \Return False
\end{algorithmic}
\end{algorithm}

\begin{theorem}\label{thm:correctness1}
	Let $G$ be a cyclically $4$-edge-connected cubic graph. If Algorithm~\ref{alg:heuristic} is applied to $G$ and returns True, $G$ has Frank number $2$.
\end{theorem}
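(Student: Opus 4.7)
The plan is to inspect the two lines at which Algorithm~\ref{alg:heuristic} can return True (lines~\ref{line:first} and~\ref{line:second}) and argue that in each case the data accumulated by the algorithm constitutes a complete witness for the hypotheses of Theorem~\ref{thm:2OddCycles} or Theorem~\ref{thm:2Odd1Even}, respectively; an appeal to the corresponding theorem then immediately yields $fn(G) = 2$. Before the case split, I would note that both return paths lie inside the outer loop over perfect matchings $F$ of $G$ and are only reached after the algorithm has verified that $C = G - F$ is a 2-factor containing exactly two odd circuits $N_1, N_2$, matching the 2-factor structure common to both sufficient conditions.

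For the first return (line~\ref{line:first}): at that point the algorithm has fixed an edge $e = x_1 x_2$ with $x_1 \in V(N_1)$ and $x_2 \in V(N_2)$, chosen a matching $M$ of $C - \{x_1, x_2\}$, and successfully extended a partial smooth orientation of the circuits of $F - \{e\} \cup M$ to one that is consistent with some smooth orientation of $C$ on the edges of $N_i$ at distance $1$ from $x_i$, for $i \in \{1, 2\}$. These are precisely the hypotheses of Theorem~\ref{thm:2OddCycles}, so $fn(G) = 2$ follows from that theorem.

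For the second return (line~\ref{line:second}): the algorithm has fixed a length-3 path $x_1 y_1 y_2 x_2$ with $x_1 \in V(N_1)$, $x_2 \in V(N_2)$ and $y_1, y_2$ on a common even circuit $W$ of $C$; chosen a matching $M$ of $C - \{x_1, y_1, y_2, x_2\}$; verified that smooth orientations of the circuits of $F - \{x_1 y_1, x_2 y_2\} \cup M$ and of $C$ exist which are jointly consistent on the prescribed edges of $N_1$, $N_2$, $W$; and verified that $G \sim x_1 y_1 \sim x_2 y_2$ is cyclically 3-edge-connected; it has further enumerated all triples $\{e, e_1, e_2\}$ with $e \in \{u_1 x_1, w_1 y_1, u_2 x_2\}$ and $e_1, e_2 \in E(F - \{x_1 y_1, x_2 y_2\} \cup M)$ to verify the cycle-separating 3-edge-cut condition. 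Because suppressing a 2-valent vertex preserves the set of edge-cuts of a graph, the edges $u_1 x_1$, $w_1 y_1$, $u_2 x_2$ of $G - x_1 y_1 - x_2 y_2$ correspond bijectively under smoothing to the edges $u_1 v_1$, $w_1 w_2$, $u_2 v_2$ of $G \sim x_1 y_1 \sim x_2 y_2$ that appear in the statement of Theorem~\ref{thm:2Odd1Even}, and the cut enumerated by the algorithm matches the one forbidden by the theorem. Thus the hypotheses of Theorem~\ref{thm:2Odd1Even} hold and $fn(G) = 2$.

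The main bookkeeping will be making the smoothing-versus-edge-deletion correspondence in the second branch precise, and confirming that the theorems may be invoked regardless of the particular matching $M$ chosen by the algorithm: since both theorems give only sufficient conditions, any matching for which the required smooth orientation exists is enough to witness Frank number $2$. Once these correspondences are in place the proof reduces to a direct application of the appropriate theorem in each branch.
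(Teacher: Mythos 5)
Your proposal follows the paper's own proof essentially verbatim: both arguments inspect the two return lines of Algorithm~\ref{alg:heuristic}, observe that each return is reached only after the algorithm has accumulated exactly the data required by the hypotheses of Theorem~\ref{thm:2OddCycles} or Theorem~\ref{thm:2Odd1Even}, and then invoke the corresponding theorem; the translation from the cycle-separating condition on $G - x_1y_1 - x_2y_2$ (checked by the algorithm) to the one on $G\sim x_1y_1\sim x_2y_2$ (required by Theorem~\ref{thm:2Odd1Even}) is also handled the same way. The only cosmetic difference is that you spell out the smoothing correspondence a bit more explicitly, but the substance is identical.
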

\begin{proof}
	Suppose the algorithm returns True for $G$. This happens in a specific iteration of the outer for-loop corresponding to a perfect matching $F$. The complement of $F$ is a $2$-factor, say $C$, and since the algorithm returns True, $C$ has precisely two odd circuits, say $N_1$ and $N_2$, and possibly some even circuits.
	
Suppose first that the algorithm returns True on Line~\ref{line:first}. Then there is an edge $x_1x_2$ in $G$ with $x_1 \in V(N_1)$ and $x_2\in V(N_2)$, a maximal matching $M$ of $C - \{x_1, x_2\}$ and orientations ($F - \{x_1x_2\}\cup M, o_1)$ and $(N_1\cup N_2, o_2)$ which are consistent on the edges of $N_i$ at distance 1 from $x_i$. Now by Theorem~\ref{thm:2OddCycles} it follows that $G$ has Frank number $2$.
	
Now suppose that the algorithm returns True on Line~\ref{line:second}. Then there are edges $x_1y_1, y_1y_2$ and $y_2x_2$ such that $x_1\in V(N_1)$, $x_2\in V(N_2)$ and $y_1,y_2\in V(W)$ where $W$ is some even circuit in $C$. Since the algorithm returns True, there is a maximal matching $M$ of $C - \{x_1,y_1,y_2,x_2\}$ and smooth orientations $(F - \{x_1x_2\}\cup M,o_1)$ and $(N_1\cup N_2\cup W, o_2)$ which are consistent on the edges of $N_i$ at distance 1 from $x_i$ and on the edges of $W$ at distance 1 from $y_i$ and are not incident with $y_{3-i}$ for $i\in\{1,2\}$.

Denote the neighbours of $x_1$ and $x_2$ in $C$ by $u_1, v_1$ and $u_2, v_2$, respectively and denote the neighbour of $y_1$ in $C - y_2$ by $w_1$ and the neighbour of $y_2$ in $C - y_1$ by $w_2$. Since no triple $e, e_1, e_2$, where $e\in \{u_1x_1, w_1y_1, u_2x_2\}$, $e_1, e_2\in E(F - \{x_1y_1, x_2y_2\}\cup M)$, is a cycle-separating edge-set of $G - \{x_1y_1,x_2y_2\}$, $G \sim x_1y_1\sim x_2y_2$ has no cycle-separating edge-set $\{e, e_1, e_2\}$, where $e\in \{u_1v_1,u_2v_2, w_1w_2\}$ and $e_1, e_2\in E(F - \{x_1y_1, x_2y_2\}\cup M)$. Now by Theorem~\ref{thm:2Odd1Even} it follows that $G$ has Frank number 2.
\end{proof}

\begin{algorithm}[!htb]
	\caption{frankNumberIs2(Graph $G$)}\label{alg:fn=2}
	\begin{algorithmic}[1]
		\ForAll{orientations $(G,o)$ of $G$} \label{line:orientationsLoop}
			\If{$(G,o)$ is not strong}
				\State Continue with next orientation
			\EndIf
			\State Store deletable edges of $(G,o)$ in a set $D$
            \ForAll{$v\in V(G)$}
			    \If{no edge incident to $v$ is deletable}
				    \State Continue with next orientation
			    \EndIf
            \EndFor
			\State Create empty partial orientation $(G,o')$ of $G$
			\State Choose an edge $xy$ in $G$ and fix orientation $x\rightarrow y$ in $o'$\label{line:choiceOfArc} 
			\If{not canAddArcsRecursively($(G, o')$, $D$, $x\rightarrow y$)} // Algorithm~\ref{alg:recursion}
				\State Continue loop with next orientation
			\EndIf
            \If{canCompleteOrientation($(G, o')$, $D$)} // Algorithm~\ref{alg:canCompleteOrientation}
                \State \Return True
            \EndIf
		\EndFor
		\State \Return False
	\end{algorithmic}
\end{algorithm}

\begin{algorithm}[!htb]
    \caption{canCompleteOrientation(Partial Orientation $(G, o')$, Set $D$}\label{alg:canCompleteOrientation}
    \begin{algorithmic}[1]
        \If{all edges are oriented in $(G,o')$}
            \If{$D\cup D(G,o') = E(G)$}\label{line:fnCheck}
                \State \Return True
            \EndIf
            \State \Return False
        \EndIf
        
        \State // $(G,o')$ still has unoriented edges
        \State Store a copy of $(G,o')$ in $(G,o'')$
        \State Choose an edge $uv$ unoriented in $(G,o')$
        \If{canAddArcsRecursively($(G,o')$, $D$, $u\rightarrow v$)}
            \If{canCompleteOrientation($(G,o')$, $D$)} 
                \State \Return True
            \EndIf
        \EndIf
        \State Reset $o'$ using $o''$
        \If{canAddArcsRecursively($(G,o')$, $D$, $v\rightarrow u$)}
            \If{canCompleteOrientation($(G,o')$, $D$)}
                \State \Return True
            \EndIf
        \EndIf
        \State \Return False
    \end{algorithmic}
\end{algorithm}

We will use the following Lemma for the proof of the exact algorithm's correctness.

\begin{lemma}\label{lemma:rules}
	Let $G$ be a cubic graph with $fn(G) = 2$ and let $(G,o)$ and $(G,o')$ be two orientations of $G$ such that every edge $e\in E(G)$ is deletable in either $(G,o)$ or $(G,o')$. Then the following hold for $(G,o')$:
	\begin{enumerate}
		\item every vertex has at least one incoming and one outgoing edge in $(G,o')$,
		\item let $uv\not\in D(G, o)$, then $u$ has one incoming and one outgoing edge in $(G-uv, o')$,
		\item let $uv, vw\not\in D(G, o)$, then they are oriented either $u\rightarrow v$, $w\rightarrow v$ or $v\rightarrow u$, $v\rightarrow w$ in $(G,o')$.
	\end{enumerate}
\end{lemma}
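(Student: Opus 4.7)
The plan is to handle the three items in order, after first making one preliminary observation that is the true engine behind item~(1). The observation is: in a cubic graph, no strong orientation can have all three edges incident to a single vertex $v$ deletable. Indeed, in a cubic graph $v$ has, say, two outgoing edges and one incoming edge (or symmetrically); deleting the unique incoming edge leaves $v$ as a source, so the resulting orientation is not strong, contradicting deletability of that edge.

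For item~(1), I would argue contrapositively. Suppose some vertex $v$ has no incoming edge in $(G,o')$. Then no incident edge of $v$ can be deletable in $(G,o')$, because a deletable edge lies in a strong orientation and thus $v$ must still have an incoming edge after its removal. Hence all three edges at $v$ would have to be deletable in $(G,o)$, contradicting the preliminary observation above. The argument is symmetric if $v$ has no outgoing edge in $(G,o')$.

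For item~(2), since $uv\notin D(G,o)$ and every edge must be deletable in at least one of $(G,o)$ and $(G,o')$, we have $uv\in D(G,o')$. Hence $(G-uv,o')$ is a strong orientation. In a strong orientation every vertex has at least one incoming and at least one outgoing edge. Since $G$ is cubic, $u$ has exactly two edges in $G-uv$, so one is incoming and the other is outgoing in $(G-uv,o')$, as claimed.

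For item~(3), let $vx$ denote the third edge incident to $v$. Applying item~(2) to $uv$ tells us that among $\{vw,vx\}$, exactly one edge is incoming at $v$ and one is outgoing in $(G,o')$; similarly, applying item~(2) to $vw$, exactly one of $\{uv,vx\}$ is incoming at $v$ and the other outgoing. The orientation of $vx$ therefore forces the orientations of both $uv$ and $vw$: if $x\rightarrow v$, then necessarily $v\rightarrow u$ and $v\rightarrow w$, while if $v\rightarrow x$, then $u\rightarrow v$ and $w\rightarrow v$. These are exactly the two configurations in the statement.

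I do not expect any real obstacle here: the preliminary observation about cubic graphs is the only non-trivial input, and once it is in hand the three items follow by short local case analyses at $v$. The main care needed is simply to verify that whenever the lemma uses the phrase "deletable", one may legitimately invoke strongness of the corresponding orientation on $G$ or on $G$ minus one edge.
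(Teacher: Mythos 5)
Your proposal is correct and follows essentially the same approach as the paper: the engine in both is the observation that a strong orientation of a cubic graph cannot have all three edges at a vertex deletable, which you state explicitly and the paper invokes implicitly via ``there is some edge $ux$ not in $D(G,o)$''. The only structural difference is that you derive item~(3) as a corollary of item~(2) applied twice at $v$, whereas the paper argues item~(3) directly by assuming the bad configuration $u\rightarrow v$, $v\rightarrow w$ and noting that either way the third edge at $v$ points, one of $uv$, $vw$ fails to be deletable in $(G,o')$; both routes are equally short.
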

\begin{proof}
We now prove each of the three properties:
	\begin{enumerate}
		\item Let $u$ be a vertex such that all its incident edges are either outgoing or incoming in $(G,o')$. Clearly none of these edges can be deletable in $(G,o')$. Since there is some edge $ux$ not in $D(G, o)$. We get a contradiction.
		\item Let $uv\not \in D(G, o)$ and let the remaining edges incident to $u$ be either both outgoing or both incoming in $(G,o')$. Then $uv$ is not deletable in $(G,o')$ since all oriented paths to (respectively, from) $u$ pass through $uv$.
		\item Suppose without loss of generality that we have $u\rightarrow v$ and $v\rightarrow w$ in $(G, o')$. If the remaining edge incident to $v$ is outgoing, then $uv$ is not deletable in $(G, o')$. If the remaining edge is incoming, then $vw$ is not deletable in $(G,o')$.
	\end{enumerate}
\end{proof}
\begin{theorem}\label{thm:correctness2}
	Let $G$ be a cubic graph. Algorithm~\ref{alg:fn=2} applied to $G$ returns True if and only if $G$ has Frank number $2$.
\end{theorem}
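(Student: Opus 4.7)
The plan is to prove the two directions separately. The forward direction is short: if Algorithm~\ref{alg:fn=2} returns True, then in some iteration of the outer loop on Line~\ref{line:orientationsLoop} a strong orientation $(G,o)$ was produced together with a completion $(G,o')$ passing the check $D\cup D(G,o') = E(G)$ on Line~\ref{line:fnCheck}. Every edge of $G$ is then deletable in at least one of the two orientations, so $fn(G)\le 2$; combined with the trivial lower bound $fn(G)\ge 2$ for a $3$-edge-connected cubic graph, this yields $fn(G)=2$.

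For the converse I would assume $fn(G)=2$ and fix witnessing orientations $(G,o_1)$ and $(G,o_2)$, then show that the algorithm must return True in the outer-loop iteration with $o = o_2$. First I would verify that neither early-termination check discards $o_2$. Strongness of $(G,o_2)$ is witnessed by any edge in $D(G,o_2)$. For the check that no vertex has only non-deletable incident edges, I would observe that at a cubic vertex $v$ with, say, two arcs in and one arc out, deleting the outgoing arc leaves $v$ with no outgoing edge at all, so $v$ cannot reach any other vertex; hence at most two edges at $v$ can be deletable in any fixed orientation. Applied to $o_1$ this forces some edge at $v$ to be non-deletable in $o_1$ and therefore deletable in $o_2$, so every vertex has a deletable incident edge in $o_2$ and the check is passed.

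Next I would handle the arc fixed on Line~\ref{line:choiceOfArc}. Using the remark noted there that reversing every arc of an orientation preserves its deletable set, I may replace $o_1$ by its reversal if necessary so that $o_1(xy) = x \to y$ matches the fixed choice. Then $o_1$ is a completion of the initial partial orientation and, jointly with $o_2$, still covers $E(G)$. The crux is to argue that the propagation inside \texttt{canAddArcsRecursively} never excludes $o_1$. By inspection, the deductions it performs coincide with the three implications of Lemma~\ref{lemma:rules}, which are stated as \emph{necessary} conditions on any orientation complementary to $(G,o_2)$. Hence every forced arc is consistent with $o_1$, no contradiction is ever raised on the branch corresponding to $o_1$, and the exhaustive two-way branching in \texttt{canCompleteOrientation} on the remaining unoriented edges will eventually reach the leaf that realises $o_1$ and pass the final test on Line~\ref{line:fnCheck}.

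The main obstacle I expect is the soundness of the forcing in \texttt{canAddArcsRecursively}: one has to confirm, rule by rule, that each arc it fixes is genuinely implied by the requirement "some extension of the current partial orientation is complementary to $(G,o_2)$". This is precisely what Lemma~\ref{lemma:rules} establishes, so the obstacle reduces to faithfully matching each line of the implementation to one of the three cases of that lemma; after that, the correctness of the remaining branching is a straightforward induction on the number of still-unoriented edges.
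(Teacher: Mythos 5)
Your proposal is correct and follows essentially the same strategy as the paper's proof: reduce the converse direction to showing that, in the outer-loop iteration on one of the two witnessing orientations, the forcing in \texttt{canAddArcsRecursively} never contradicts the other witness because those forcing rules are exactly the necessary conditions of Lemma~\ref{lemma:rules}, and then let the exhaustive branching in \texttt{canCompleteOrientation} reach that complementary orientation. The only differences from the paper are cosmetic relabeling of $o_1$ and $o_2$ and that you defer (but correctly describe) the line-by-line case check that the paper writes out in full.
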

\begin{proof}
Suppose that $\operatorname{frankNumberIs2(G)}$ returns True. Then there exist two orientations $(G,o)$ and $(G,o')$ for which $D(G, o)\cup D(G, o') = E(G)$. Hence, $fn(G)=2$.

Conversely, let $fn(G) = 2$.
We will show that Algorithm~\ref{alg:fn=2} returns True. Let $(G, o_1)$ and $(G, o_2)$ be orientations of $G$ such that every edge of $G$ is deletable in either $(G, o_1)$ or $(G, o_2)$. Every iteration of the loop of Line~\ref{line:orientationsLoop}, we consider an orientation of $G$. If the algorithm returns True before we consider $(G, o_1)$ in this loop, the proof done. So without loss of generality, suppose we are in the iteration where $(G, o_1)$ is the orientation under consideration in the loop of Line~\ref{line:orientationsLoop}.

Without loss of generality assume that the orientation of $xy$ we fix on Line~\ref{line:choiceOfArc} is in $(G, o_2)$. (If not, reverse all edges of $(G, o_2)$ to get an orientation with the same set of deletable edges.) Let $(G, o')$ be a partial orientation of $G$ and assume that all oriented edges correspond to $(G, o_2)$. Let $u\rightarrow v$ be an arc in $(G, o_2)$.  If $u\rightarrow v$ is present in $(G, o')$, then canAddArcsRecursively($G$, $D(G, o)$, $o'$, $u\rightarrow v$) (Algorithm~\ref{alg:recursion}) returns True and no extra edges become oriented in $(G, o')$. If $u\rightarrow v$ is not present in $(G, o')$, it gets added on Line~\ref{line:addArc} of Algorithm~\ref{alg:recursion}, since the if-statement on Line~\ref{line:violateRules} of Algorithm~\ref{alg:recursion} will return True by Lemma~\ref{lemma:rules}. Note that this is the only place where an arc is added to $(G, o')$ in Algorithm~\ref{alg:recursion}. Hence, if we only call Algorithm~\ref{alg:recursion} on arcs present in $(G, o_2)$, then all oriented edges $e$ of $(G, o')$ will always be oriented as $o_2(e)$. Now we will show that Algorithm~\ref{alg:recursion} indeed only calls itself on arcs in $(G, o_2)$.
	
Again, suppose $u\rightarrow v$ is an arc in $(G, o_2)$, that it is not yet oriented in $(G, o')$ and that every oriented edge $e$ of $(G, o')$ has orientation $o_2(e)$. The call canAddArcsRecursively($G$, $D(G, o)$, $o'$, $u\rightarrow v$) can only call itself on Line~\ref{line:call_self}, i.e.\ in Algorithm~\ref{alg:orientFixedEdges}. We show that in all cases after orienting $uv$ as $u\rightarrow v$ in $(G,o')$, the call to Algorithm~\ref{alg:recursion} only happens on arcs oriented as in $(G,o_2)$.

Suppose $u$ has two outgoing and no incoming arcs in $(G, o')$. Let $ux$ be the final unoriented edge incident to $u$. Then $(G, o_2)$ must have arc $x\rightarrow u$, otherwise it has three outgoing arcs from the same vertex. Now suppose $v$ has two incoming and no outgoing arcs in $(G, o')$. Let $vx$ be the final unoriented edge incident to $v$. Then $(G, o_2)$ must have arc $v\rightarrow x$, otherwise it has three incoming arcs to the same vertex.

Suppose $uv$ is deletable in $(G, o_1)$. Let $ux$ also be deletable in $(G, o_1)$. Denote the final edge incident to $u$ by $uy$. Clearly, $uy$ cannot be deletable in $(G, o_1)$. Hence, it is deletable in $(G, o_2)$. If $(G, o_2)$ contains $u\rightarrow x$, then $uy$ is not deletable in $o_2$. Hence, $(G, o_2)$ contains $x\rightarrow u$. Let $vx$ be a deletable edge of $(G, o_1)$ and denote the final edge incident to $v$ by $vy$. Since $vy$ cannot be deletable in $(G, o_1)$, $(G, o_2)$ must contain arc $v\rightarrow x$. Suppose that the edges incident with $u$ which are not $uv$ are both not in $D(G, o_1)$. Then they must be oriented incoming to $u$ in $(G, o_2)$. Similarly, if the edges incident with $v$ which are not $uv$ are both not in $D(G, o_1)$, they must both be outgoing from $v$ in $(G, o_2)$. 

Finally, suppose that $uv$ is not a deletable edge in $(G, o_1)$. Suppose that $(G, o')$ still has one unoriented edge incident to $u$, say $ux$. If the other incident edges are one incoming and one outgoing from $u$, then $(G, o_2)$ contains the arc $u\rightarrow x$. Otherwise, $uv$ cannot be deletable in $(G, o_2)$. Similarly, if $(G, o')$ still has one unoriented edge incident to $v$, say $vx$ and the remaining incident edges are one incoming and one outgoing, then the arc $x\rightarrow v$ must be present in $(G, o_2)$. Otherwise, $uv$ cannot be deletable in $(G, o_2)$. If $ux$ is not deletable in $(G, o_1)$ $x\neq v$. Then $(G, o_2)$ contains the arc $u\rightarrow x$. Otherwise, not both of $uv$ and $ux$ can be deletable in $(G, o_2)$. Similarly, if $vy$ is not deletable in $(G, o_1)$ and $y\neq u$, then $(G, o_2)$ must contain the arc $y\rightarrow v$. Otherwise, not both of $uv$ and $vy$ can be deletable in $(G, o_2)$.

This shows that all calls of canAddArcsRecursively($G$, $D(G, o)$, $o'$, $u\rightarrow v$) to itself, where all oriented edges of $(G, o')$ and $u->v$ are oriented as in $(G, o_2)$,  have as the fourth parameter an arc oriented as in $(G, o_2)$. Since, in Algorithm~\ref{alg:canCompleteOrientation}, we keep orienting edges until $(G,o')$ is completely oriented and try to orient edge $uv$ as $v\rightarrow u$ if $(G, o')$ cannot be completed with $u\rightarrow v$, it follows by induction that unless Algorithm~\ref{alg:fn=2} returns True in some other case, it will return True when $(G, o') = (G, o_2)$.
\end{proof}

\begin{algorithm}[!htb]
	\caption{canAddArcsRecursively(Partial Orientation $(G, o')$, Set $D$, Arc $u\rightarrow v$)}\label{alg:recursion}
	\begin{algorithmic}[1]
        \State // Check if $u\rightarrow v$ can be added and recursively orient edges for which the orientation is enforced by the rules of Lemma~\ref{lemma:rules}
		\If{$u\rightarrow v$ is present in $(G,o')$}
			\State \Return True
		\EndIf
  		\If{$v\rightarrow u$ is present in $(G,o')$}
			\State \Return False
		\EndIf
        \If{adding $u\rightarrow v$ violates rules of Lemma~\ref{lemma:rules}} // Algorithm~\ref{alg:canAdd} in Appendix~\ref{app:algorithms}\label{line:violateRules}
            \State \Return False 
        \EndIf
		\State Add $u\rightarrow v$ to $(G,o')$\label{line:addArc}
		\If{the orientation of edges enforced by Lemma~\ref{lemma:rules} yields a contradiction} // Algorithm~\ref{alg:orientFixedEdges} in Appendix~\ref{app:algorithms}\label{line:call_self}
            \State \Return False
        \EndIf
		\State \Return True
	\end{algorithmic}
\end{algorithm}

\subsection{Results}\label{sec:results}
Since by Theorem~\ref{thm:4flow} all $3$-edge-connected $3$-edge-colourable (cubic) graphs have Frank number $2$, in this section we will focus on \textit{non}-$3$-edge-colourable cubic graphs, i.e.\ \textit{snarks}.

In \cite{BGHM13} Brinkmann et al.\ determined all cyclically $4$-edge-connected snarks up to order $34$ and those of girth at least $5$ up to order $36$. 
This was later extended with all cyclically $4$-edge-connected snarks on $36$ vertices as well \cite{GMS19}. These lists of snarks can be obtained from the House of Graphs \cite{CDG23} at: \url{https://houseofgraphs.org/meta-directory/snarks}. Using our implementation of Algorithms~\ref{alg:heuristic} and~\ref{alg:fn=2}, we tested for all cyclically $4$-edge-connected snarks up to $36$ vertices if they have Frank number $2$ or not. This led to the following result.
\begin{proposition}\label{prop:petersenComp}
	The Petersen graph is the only cyclically $4$-edge-connected snark up to order $36$ which has Frank number not equal to $2$. 
\end{proposition}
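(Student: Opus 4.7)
The plan is to settle Proposition~\ref{prop:petersenComp} by exhaustive computation, leveraging the existing complete catalogues of cyclically $4$-edge-connected snarks together with the two algorithms developed in this section. First, I would fetch the lists of cyclically $4$-edge-connected snarks of order at most $36$ from the House of Graphs repository, which collects the outputs of~\cite{BGHM13} (orders $\leq 34$, and girth $\geq 5$ through $36$) and~\cite{GMS19} (the remaining snarks on $36$ vertices). Completeness of these lists is essential: it is what lets a negative computational outcome translate into a structural statement, so I would cite these generation results explicitly as the source of the input.

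Next, I would run the heuristic Algorithm~\ref{alg:heuristic} on every snark in the lists. By Theorem~\ref{thm:correctness1}, any positive answer from the heuristic is a rigorous certificate that the graph has Frank number $2$. Because the theoretical sufficient conditions in Theorems~\ref{thm:2OddCycles} and~\ref{thm:2Odd1Even} require a $2$-factor with exactly two odd circuits, I would first record the fraction of the input snarks possessing such a $2$-factor (the remark preceding Theorem~\ref{thm:2OddCycles} predicts this to be most of them), and for those the heuristic alone is expected to decide almost all cases very quickly.

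For the (expected small) residue of snarks where the heuristic fails to return True, I would invoke the exact Algorithm~\ref{alg:fn=2}. By Theorem~\ref{thm:correctness2} its output is a definitive True/False verdict, so running it on every remaining graph completes the classification: the Frank number equals $2$ precisely when the algorithm returns True, and is strictly greater than $2$ otherwise. All the graphs returning False must then be compared against the Petersen graph (for instance by a canonical form check using \texttt{nauty}) to verify that the Petersen graph is the unique exception. The implementation in~\cite{GMR23Program} would be used for this, and I would also run a sanity check confirming $fn(P) = 3$, matching H\"orsch and Szigeti's result~\cite{HS21}.

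The main obstacle I anticipate is the running time of the exact algorithm on the larger orders, since in principle Algorithm~\ref{alg:fn=2} iterates over strong orientations, a set whose size grows rapidly with $n$. The mitigation has two layers already built in: (i) the heuristic is expected to dispose of the overwhelming majority of inputs, so the exact algorithm only needs to handle a small, hard residue; (ii) the pruning rules from Lemma~\ref{lemma:rules} used inside Algorithm~\ref{alg:recursion} together with the bitvector-based implementation should keep the exact search tractable for graphs of this order. If a particular snark proved too slow, I would precompute invariants such as resistance or oddness and split the exact run across parallel workers. Concluding the proof then reduces to reporting the computational outcome: on all inputs, the combined procedure decides $fn = 2$ except for the Petersen graph, which yields exactly the claim of the proposition.
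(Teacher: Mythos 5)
Your proposal matches the paper's own approach: obtain the complete catalogue of cyclically $4$-edge-connected snarks up to order $36$ from~\cite{BGHM13,GMS19} via the House of Graphs, filter with the heuristic Algorithm~\ref{alg:heuristic} (justified by Theorem~\ref{thm:correctness1}), and settle the small residue with the exact Algorithm~\ref{alg:fn=2} (justified by Theorem~\ref{thm:correctness2}). The implementation details you anticipate (bitvector pruning via Lemma~\ref{lemma:rules}, parallelising the exact search, confirming $fn$ of the Petersen graph) are exactly what the paper does, so the argument is correct and essentially identical.
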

This was done by first running our heuristic Algorithm~\ref{alg:heuristic} on these graphs. It turns out that there are few snarks in which neither the configuration of Theorem~\ref{thm:2OddCycles} nor the configuration of Theorem~\ref{thm:2Odd1Even} are present. For example: for more than $99.97\%$ of the cyclically $4$-edge-connected snarks of order $36$, Algorithm~\ref{alg:heuristic} is sufficient to determine that their Frank number is $2$ (see Table~\ref{tab:snarksPassingHeuristic} in Appendix~\ref{app:snarksPassingHeuristic} for more details). Thus we only had to run our exact Algorithm~\ref{alg:fn=2} (which is significantly slower than the heuristic) on the graphs for which our heuristic algorithm failed. In total about 214 CPU days of computation time was required to prove Proposition~\ref{prop:petersenComp} using Algorithm~\ref{alg:heuristic} and~\ref{alg:fn=2} (see Table~\ref{tab:runtimes} in Appendix~\ref{app:snarksPassingHeuristic} for more details).

In \cite{Ja85} Jaeger defines a snark $G$ to be a \textit{strong snark} if for every edge $e\in E(G)$, $G\sim e$, i.e.\ the unique cubic graph such that $G - e$ is a subdivision of $G\sim e$, is not $3$-edge-colourable. Hence, a strong snark containing a $2$-factor which has precisely two odd circuits, has no edge $e$ connecting those two odd circuits, i.e.\ the configuration of Theorem~\ref{thm:2OddCycles} cannot be present. Therefore, they might be good candidates for having Frank number greater than $2$.

In~\cite{BGHM13} it was determined that there are $7$ strong snarks on $34$ vertices having girth at least $5$, $25$ strong snarks on $36$ vertices having girth at least $5$ and no strong snarks of girth at least $5$ of smaller order. By Proposition~\ref{prop:petersenComp}, their Frank number is $2$. In~\cite{BG17} it was determined that there are at least $298$ strong snarks on $38$ vertices having girth at least $5$ and the authors of~\cite{BG17} speculate that this is the complete set. We found the following.
\begin{observation}
    The $298$ strong snarks of order $38$ determined in~\cite{BG17} have Frank number~$2$.
\end{observation}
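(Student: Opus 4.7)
The plan is to verify the observation purely computationally, in the same way Proposition~\ref{prop:petersenComp} was established, by applying our two algorithms to each of the $298$ strong snarks of order $38$ from~\cite{BG17}. Correctness of the verification is guaranteed by Theorems~\ref{thm:correctness1} and~\ref{thm:correctness2}; in particular, we only need to confirm that each graph returns True, either from the heuristic Algorithm~\ref{alg:heuristic} or from the exact Algorithm~\ref{alg:fn=2}.

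First, I would download the $298$ strong snarks from the source provided in~\cite{BG17} and, as a sanity check, verify that all of them are indeed cyclically $4$-edge-connected snarks of order $38$ and that they satisfy the strong-snark property (for every edge $e$, the cubic graph $G\sim e$ is not $3$-edge-colourable). Then I would run Algorithm~\ref{alg:heuristic} on each graph. Note that, as observed just above the statement, the configuration of Theorem~\ref{thm:2OddCycles} cannot occur in a strong snark: if $C$ is a $2$-factor with exactly two odd circuits $N_1,N_2$ and $e=x_1x_2$ connects them, then $G\sim e$ would admit a nowhere-zero $4$-flow via $C$ and hence be $3$-edge-colourable. Therefore, for these graphs the heuristic will only ever succeed through the configuration of Theorem~\ref{thm:2Odd1Even}, i.e.\ via an even circuit $W$ in the $2$-factor connecting $N_1$ and $N_2$ by a path $x_1y_1,y_1y_2,y_2x_2$.

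Second, for each strong snark on which the heuristic returns False, I would fall back to Algorithm~\ref{alg:fn=2}. By Theorem~\ref{thm:correctness2} this is an exact procedure, so if it returns True the Frank number is $2$. If, for some graph, Algorithm~\ref{alg:fn=2} returns False, we would have a new counterexample to Conjecture~\ref{conjecture1} of order $38$; based on the evidence collected in Proposition~\ref{prop:petersenComp} I expect this not to happen, and the observation claims exactly this. So the desired conclusion is that every one of the $298$ graphs yields True from one of the two algorithms.

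The main obstacle is the running time of the exact algorithm on the graphs missed by the heuristic. Strong snarks are specifically engineered so that many $2$-factors have only two odd circuits (because of the $3$-edge-colourability condition on $G\sim e$), and for such $2$-factors Theorem~\ref{thm:2OddCycles} is useless. It is therefore plausible that the fraction of strong snarks passing the heuristic is considerably lower than the $99.97\%$ reported for arbitrary order-$36$ snarks in Table~\ref{tab:snarksPassingHeuristic}, so a nontrivial number will have to be handled by Algorithm~\ref{alg:fn=2}. To keep this feasible I would reuse the same implementation~\cite{GMR23Program}, parallelise the exact runs across multiple CPU cores, and (as before) fix the first arc on Line~\ref{line:choiceOfArc} once per graph to halve the orientation search space. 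Assuming that every graph returns True, the observation follows; as a safeguard I would additionally re-run the computation with an independently chosen initial arc ordering on the graphs that barely finish in time to guard against implementation bugs.
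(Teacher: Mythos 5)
Your proposal is correct and matches the paper's approach exactly: the observation is established purely computationally by running Algorithm~\ref{alg:heuristic} and, where it fails, Algorithm~\ref{alg:fn=2} on all $298$ strong snarks of order $38$, with correctness guaranteed by Theorems~\ref{thm:correctness1} and~\ref{thm:correctness2}. Your side remark explaining why the Theorem~\ref{thm:2OddCycles} configuration cannot occur in a strong snark also matches the paper's discussion preceding the observation.
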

These snarks can be obtained from the House of Graphs~\cite{CDG23} by searching for the keywords ``strong snark". 

The configurations of Theorem~\ref{thm:2OddCycles} and Theorem~\ref{thm:2Odd1Even} also cannot occur in snarks of \emph{oddness}~$4$, i.e.\ the smallest number of odd circuits in a $2$-factor of the graph is $4$. Hence, these may also seem to be good candidates for having Frank number greater than $2$. In \cite{GMS19,GMS20} it was determined that the smallest snarks of girth at least $5$ with oddness $4$ and cyclic edge-connectivity $4$ have order $44$ and that there are precisely $31$ such graphs of this order. We tested each of these and found the following.
\begin{observation}
    Let $G$ be a snark of girth at least $5$, oddness $4$, cyclic edge-connectivity $4$ and order $44$. Then $fn(G) = 2$.
\end{observation}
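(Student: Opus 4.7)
The plan is purely computational: the statement concerns a finite, explicit family of $31$ graphs, and the claim is that each of them has Frank number $2$. Since every $3$-edge-connected cubic graph has Frank number at least $2$, it suffices to exhibit, for each such graph $G$, two orientations $o_1,o_2$ of $G$ with $D(G,o_1)\cup D(G,o_2)=E(G)$. So my approach would be to run our implementations of Algorithms~\ref{alg:heuristic} and~\ref{alg:fn=2} on the $31$ graphs of the stated type (which are available from the sources cited in \cite{GMS19,GMS20}) and invoke Theorem~\ref{thm:correctness2}.

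First I would observe that Algorithm~\ref{alg:heuristic} cannot resolve any of these cases, and hence can be skipped: both Theorem~\ref{thm:2OddCycles} and Theorem~\ref{thm:2Odd1Even} require the existence of a $2$-factor with exactly two odd circuits, whereas by assumption every $2$-factor of $G$ contains at least four odd circuits (oddness $4$). Therefore the heuristic step is vacuous and we must rely entirely on the exact Algorithm~\ref{alg:fn=2} for each of the $31$ graphs.

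Next I would apply Algorithm~\ref{alg:fn=2} to every graph in the list. For each graph, the algorithm enumerates strong orientations $(G,o_1)$, prunes those containing a vertex all of whose incident edges are non-deletable, and then searches via Algorithm~\ref{alg:recursion} and Algorithm~\ref{alg:canCompleteOrientation} for a complementary orientation $(G,o_2)$ such that $D(G,o_1)\cup D(G,o_2)=E(G)$. By Theorem~\ref{thm:correctness2}, the algorithm returns True on input $G$ if and only if $fn(G)=2$, so verifying that every one of the $31$ graphs produces the output True establishes the observation.

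The main obstacle I expect is runtime rather than correctness. With oddness $4$ and $44$ vertices the underlying search space of strong orientations is substantial, and unlike the snarks treated in Proposition~\ref{prop:petersenComp} we cannot prune the bulk of the work via the heuristic. However, the pruning rules from Lemma~\ref{lemma:rules} embedded in Algorithm~\ref{alg:recursion} are strong enough to keep the branching manageable, and since the family has only $31$ members the computation remains feasible. Once the algorithm terminates with True on each graph, the observation follows.
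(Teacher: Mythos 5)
Your proposal matches the paper's approach exactly: since oddness $4$ means every $2$-factor of each of these graphs has at least four odd circuits (and, in fact, an even number of them since $|V(G)|$ is even), neither Theorem~\ref{thm:2OddCycles} nor Theorem~\ref{thm:2Odd1Even} can apply, so the heuristic necessarily fails and the observation is established by running the exact Algorithm~\ref{alg:fn=2} on the $31$ graphs and invoking Theorem~\ref{thm:correctness2}. This is precisely how the result is obtained in the paper.
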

These snarks of oddness $4$ can be obtained from the House of Graphs~\cite{CDG23} at \url{https://houseofgraphs.org/meta-directory/snarks}.

\subsection{Correctness Testing}

The correctness of our algorithm was shown in Theorem~\ref{thm:correctness1} and Theorem~\ref{thm:correctness2}. We also performed several tests to verify that our implementations are correct.

H\"orsch and Szigeti proved in \cite{HS21} that the Petersen graph has Frank number $3$. In~\cite{BB24_quest} Bar{\'a}t and Bl{\'a}zsik showed that both Blanu\v{s}a snarks and every flower snark has Frank number $2$. We verified that for the Petersen graph both Algorithm~\ref{alg:heuristic} and Algorithm~\ref{alg:fn=2} give a negative result, confirming that its Frank number is larger than $2$. For the Blanu\v{s}a snarks and the flower snarks up to $40$ vertices Algorithm~\ref{alg:fn=2} always shows the graph has Frank number $2$ and the heuristic Algorithm~\ref{alg:heuristic} is able to show this for a subset of these graphs.

We also ran our implementation of Algorithm~\ref{alg:fn=2} on the cyclically $4$-edge-connected snarks up to $30$ vertices without running Algorithm~\ref{alg:heuristic} first. Results were in complete agreement with our earlier computation, i.e.\ Algorithm~\ref{alg:fn=2} independently confirmed that each of the snarks which have Frank number $2$ according to Algorithm~\ref{alg:heuristic} indeed have Frank number $2$. For the cyclically $4$-edge-connected snarks on $30$ vertices Algorithm~\ref{alg:fn=2} took approximately $46$ hours. Our heuristic Algorithm~\ref{alg:heuristic} found the same results for all but $307$ graphs in approximately $42$ seconds.

During the computation of Algorithm~\ref{alg:heuristic} on the strong snarks, we verified it never returned True for the configuration of Theorem~\ref{thm:2OddCycles} and that it returned False for all snarks of oddness $4$ mentioned earlier. 

We also implemented a method for finding the actual orientations after Algorithm~\ref{alg:heuristic} detects one of the configurations. We checked for all cyclically $4$-edge-connected snarks up to $32$ vertices in which one of these configurations is found, whether the deletable edges for these two orientations form the whole edge set. This was always the case. 

Another test was performed using a brute force algorithm which generates all strongly connected orientations of the graph and checks for every pair of these orientations whether the union of the deletable edges of this pair of orientations is the set of all edges. We were able to do this for all cyclically-$4$-edge-connected snarks up to $26$ vertices and obtained the same results as with our other method. Note that this method is a lot slower than Algorithm~\ref{alg:heuristic} and~\ref{alg:fn=2}. For order $26$ this took approximately 152 hours, while using Algorithm~\ref{alg:heuristic} and~\ref{alg:fn=2} this took approximately $1$ second.

Our implementation of Algorithm~\ref{alg:heuristic} and Algorithm~\ref{alg:fn=2} is open source and can be found on GitHub~\cite{GMR23Program} where it can be inspected and used by others.

\subsubsection*{Acknowledgements}
We would like to thank J{\'a}nos Bar{\'a}t for useful discussions. 
The research of Edita Máčajová was partially supported by the grant No.\ APVV-19-0308 of the Slovak Research and Development Agency and the grant VEGA 1/0743/21.
The research of Jan Goedgebeur and Jarne renders was supported by Internal Funds of KU Leuven. 
Several of the computations for this work were carried out using the supercomputer infrastructure provided by the VSC (Flemish Supercomputer Center), funded by the Research Foundation Flanders (FWO) and the Flemish Government.

\clearpage
%
%
%
\bibliographystyle{plainurl}
\bibliography{ref}
%




\newpage

\appendix
\section{Appendix}
\subsection{Algorithms}\label{app:algorithms}
\begin{algorithm}[!htb]
	\caption{canAddArc(Partial Orientation $(G,o')$, Set $D$, Arc $u\rightarrow v$)}\label{alg:canAdd}
	\begin{algorithmic}[1]
        \State // Check if the addition of $u\rightarrow v$ will not violate rules of Lemma~\ref{lemma:rules}
		\If{$u$ has two outgoing arcs or $v$ has two incoming arcs in $(G,o')$}
			\State \Return False
		\EndIf
		\If{$uv\in D$}
			\ForAll{edges $e$ incident to $u$ in $G$}
				\If{$e\in D$ and $e$ is an outgoing arc of $u$ in $(G,o')$}
					\State \Return False
				\EndIf
			\EndFor
			\ForAll{edges $e$ incident to $v$ in $G$}
				\If{$e\in D$ and $e$ is an incoming arc of $v$ in $(G,o')$}
					\State \Return False
				\EndIf
			\EndFor
		\Else
			\If{in $(G,o')$, $u$ has two incoming arcs or
            \Statex[1] $v$ has two outgoing arcs or
            \Statex[1] $u$ has an incoming arc whose corresponding edge is not in $D$ or 
			\Statex[1] $v$ has an outgoing arc whose corresponding edge is not in $D$}
				\State \Return False
			\EndIf
		\EndIf
	\State \Return True		
	\end{algorithmic}
\end{algorithm}

\begin{algorithm}[!htb]
    \caption{canOrientFixedEdges(Partial Orientation $(G,o')$, Set $D$, Arc $u\rightarrow v$)}\label{alg:orientFixedEdges}
    \begin{algorithmic}[1]
        \State // Recursively orient edges whose orientation is forced by Lemma~\ref{lemma:rules}
        \If{$u$ has two outgoing and no incoming arcs in $(G,o')$}
			\State Let $ux$ be the edge of $G$ which is unoriented in $(G,o')$
			\If{not canAddArcsRecursively($(G,o')$, $D$, $x\rightarrow u$)}
				\State \Return False
			\EndIf
		\EndIf
		\If{$v$ has two incoming and no outgoing arcs in $(G,o')$}
			\State Let $vx$ be the edge of $G$ which is unoriented in $(G,o')$
			\If{not canAddArcsRecursively($(G,o')$, $D$, $v\rightarrow x$)}
				\State \Return False
			\EndIf
		\EndIf
		\If{$uv \in D$}
			\If{not orientDeletable($(G,o')$, $D$, $u\rightarrow v$)}
				\State \Return False
			\EndIf
		\Else
			\If{not orientNonDeletable($(G,o')$, $D$, $u\rightarrow v$)
            \Statex[1]}
				\State \Return False
			\EndIf
		\EndIf
        \State \Return True
    \end{algorithmic}
\end{algorithm}

\begin{algorithm}[!htb]
	\caption{orientDeletable(Partial Orientation $(G,o')$, Set $D$, Arc $u\rightarrow v$)}
	\begin{algorithmic}[1]
        \State // Recursively orient edges whose orientation is forced by Lemma~\ref{lemma:rules} in the case that $uv\in D(G,o)$
		\ForAll{edges $ux$ incident to $u$ in $G$}
			\If{$ux\in D$}
				\If{not canAddArcsRecursively($(G,o')$, $D$, $x\rightarrow u$)}
					\State \Return False
				\EndIf
			\EndIf
		\EndFor
		\ForAll{edges $vx$ incident to $v$ in $G$}
			\If{$vx\in D$}
				\If{not canAddArcsRecursively($(G,o')$, $D$, $v\rightarrow x$)}
					\State \Return False
				\EndIf
			\EndIf
		\EndFor
		\State Let $ux_1, uy_1$ be the two edges incident with $u$ in $G$ such that $x_1, y_1\neq v$
		\If{$\{ux_1, uy_1\} \cap D = \emptyset$}
			\For{$z\in \{x_1,y_1\}$}
				\If{not canAddArcsRecursively($(G,o')$, $D$, $z\rightarrow u$)}
					\State \Return False
				\EndIf
			\EndFor
		\EndIf
		\State Let $vx_2, vy_2$ be the two edges incident with $v$ in $G$ such that $x_2, y_2\neq u$
		\If{$\{vx_2, vy_2\} \cap D = \emptyset$}
			\For{$z\in \{x_2,y_2\}$}
				\If{not canAddArcsRecursively($(G,o')$, $D$, $v\rightarrow z$)}
					\State \Return False
				\EndIf
			\EndFor
		\EndIf
		\State \Return True
	\end{algorithmic}
\end{algorithm}
\begin{algorithm}[!htb]
	\caption{orientNonDeletable(Partial Orientation $(G,o')$, Set $D$, Arc $u\rightarrow v$)}
	\begin{algorithmic}[1]
        \State // Recursively orient edges which are forced by Lemma~\ref{lemma:rules} in the case that $uv\not\in D(G,o)$
		\If{$u$ has precisely two incident arcs in $(G,o')$}
			\State Let $ux$ be the edge of $G$ which is unoriented in $(G,o')$
			\If{the arcs incident to $u$ in $(G,o')$ are one incoming and one outgoing}
				\If{not canAddArcsRecursively($(G,o')$, $D$, $u\rightarrow x$)}
					\State \Return False
				\EndIf
			\EndIf
		\EndIf
		\If{$v$ has precisely two incident arcs in $(G,o')$}
			\State Let $vx$ be the edge of $G$ which is unoriented in $(G,o')$
			\If{the arcs incident to $v$ in $(G,o')$ are one incoming and one outgoing}
				\If{not canAddArcsRecursively($(G,o')$, $D$, $x\rightarrow v$)}
					\State \Return False
				\EndIf
			\EndIf
		\EndIf
        \If{there exists an edge $ux$ of $G$ such that $x\neq v$ and $ux \not\in D$}
		      \If{not canAddArcsRecursively($(G,o')$, $D$, $u\rightarrow x$)}
			     \State \Return False
		      \EndIf
        \EndIf
        \If{there exists an edge $vy$ of $G$ such that $y\neq u$ and $vy\not\in D$}
		      \If{not canAddArcsRecursively($(G,o')$, $D$, $y\rightarrow v$)}
			     \State \Return False
		      \EndIf
        \EndIf
		\State \Return True
	\end{algorithmic}
\end{algorithm}
\clearpage

\subsection{Snarks for which Algorithm~\ref{alg:heuristic} is sufficient}\label{app:snarksPassingHeuristic}
\begin{table}[!htb]
    \centering
    \begin{tabular}{c|r|r}
        Order & Total & Passed \\\hline
        $10$ & $1$ & $0$\\
        $18$ & $2$ & $1$\\
        $20$ & $6$ & $6$\\
        $22$ & $31$ & $29$\\
        $24$ & $155$ & $152$\\
        $26$ & $1\,297$ & $1\,283$\\
        $28$ & $12\,517$ & $12\,472$\\
        $30$ & $139\,854$ & $139\,547$\\
        $32$ & $1\,764\,950$ & $1\,763\,302$\\
        $34$ & $25\,286\,953$ & $25\,273\,455$\\
        $36$ & $404\,899\,916$ & $404\,793\,575$
    \end{tabular}
    \caption{Number of cyclically $4$-edge-connected snarks for which Algorithm~\ref{alg:heuristic} is sufficient to decide that the graph has Frank number $2$. In the second column the total number of cyclically $4$-edge-connected snarks for the given order can be found. In the third column the number of such snarks in which the configuration of Theorem~\ref{thm:2OddCycles} or Theorem~\ref{thm:2Odd1Even} is present is given.}
    \label{tab:snarksPassingHeuristic}
\end{table}

\begin{table}[!htb]
    \centering
    \begin{tabular}{c|r | r}
        Order &  Algorithm~\ref{alg:heuristic} & Remainder\\\hline
         $28$ & $4$ s & $9$ s\\
         $30$ & $42$ s & $304$ s\\
         $32$ & $585$ s & $3$ h\\
         $34$ & $3$ h & $106$ h\\
         $36$ & $54$ h & $4975$ h
    \end{tabular}
    \caption{Runtimes of Algorithm~\ref{alg:heuristic} and~\ref{alg:fn=2} on the cyclically $4$-edge-connected snarks of order $28$ to $36$. In the second column, the runtime of Algorithm~\ref{alg:heuristic} on these graphs can be found for the specific order. In the third column, the runtime of Algorithm~\ref{alg:fn=2} \textbf{only} on the graphs for which Algorithm~\ref{alg:heuristic} failed can be found.}
    \label{tab:runtimes}
\end{table}

\end{document}